\DeclareMathOperator{\birth}{birth}
\DeclareMathOperator{\death}{death}
\DeclareMathOperator{\rank}{rank}
\DeclareMathOperator{\im}{Im}
\DeclareMathOperator{\logg}{\log\log}
\DeclareMathOperator{\PH}{PH}
\def\R{\mathbb{R}}
\def\F{\mathbb{F}}
\def\Z{\mathbb{Z}}
\def\T{\mathbb{T}}
\def\cO{\mathcal{O}}
\def\cP{\mathcal{P}}
\def\cV{\mathcal{V}}
\def\cS{\mathcal{S}}
\def\cX{\mathcal{X}}
\newcommand{\set}[1]{\left\{#1\right\}}
\newcommand{\sbrk}[1]{\left[#1\right]}
\newcommand{\param}[1]{\left(#1\right)}
\newcommand{\prob}[1]{\mathbb{P}\left(#1\right)}
\newcommand{\eps}{\varepsilon}
\newtheorem{lem}{Lemma}[section]
\newtheorem{thm}[lem]{Theorem}
\newtheorem{prop}[lem]{Proposition}
\newtheorem{cor}[lem]{Corollary}
\newtheorem{con}[lem]{Conjecture}
\newtheorem{rem}[lem]{Remark}
\newtheorem{observation}[lem]{Observation}
\theoremstyle{definition}
\newcommand{\cech}{\v{C}ech }
\newcommand{\erdren}{Erd\H{o}s-R\'enyi }
\newcommand{\ninf}{n\to\infty}
\newcommand{\pois}[1]{\mathrm{Poisson}\param{{#1}}}
\newcommand{\limninf}{\lim_{\ninf}}
\newcommand{\bs}{\backslash}
\numberwithin{equation}{section}
\def\bsplit#1\esplit{\begin{split} #1 \end{split} }
\def\splitb#1\splite{\begin{split} #1 \end{split} }
\def\beq#1\eeq{\begin{equation} #1 \end{equation}}
\def\eqb#1\eqe{\begin{equation} #1 \end{equation}}
\newcommand{\Hg}{\mathrm{H}}
\def\crito{\lambda_{c}}
\def\critv{\bar\lambda_{c}}
\def\io{ i_{k}}
\def\iv{\bar i_{k}}
\author[1]{Omer Bobrowski \thanks{omer@ee.technion.ac.il}}
\author[2]{Primoz Skraba \thanks{p.skraba@qmul.ac.uk}}
\affil[1]{Viterbi Faculty of Electrical Engineering\\Technion - Israel Institute of Technology}
\affil[2]{School of Mathematical Sciences \\Queen Mary University of London}
\title{Homological Percolation: The Formation of Giant $k$-Cycles}
\date{\today}
\begin{document}
\maketitle

\begin{abstract}
In this paper we introduce and study a higher-dimensional analogue of the giant component in continuum percolation. Using the language of algebraic topology, we define the notion of giant $k$-dimensional cycles (with $0$-cycles being connected components). Considering a continuum percolation model in the flat $d$-dimensional torus, we show that all the giant $k$-cycles ($1\le k \le d-1$) appear in the regime known as the thermodynamic limit. We also prove that the thresholds for the emergence of the giant $k$-cycles are increasing in $k$ and are tightly related to the critical values in continuum percolation. Finally, we provide bounds for the exponential decay of the probabilities of giant cycles appearing.
\end{abstract}
\section{Introduction}

Percolation theory focuses on the formation of large-scale structures, and originally introduced as a model for  propagation of liquid in porous media. The first percolation model, introduced by Broadbent and Hammersley \cite{broadbent1957percolation}, is known today as the \emph{bond-percolation} model where bonds (connection between sites) can be either open or closed independently at random. Since then, percolation theory has become one of the dominant areas in mathematics and statistical physics, see \cite{duminil-copin_sixty_2017} for a survey on the field. In this paper we focus on a \emph{continuum percolation} model that was introduced first by Gilbert \cite{gilbert_random_1961} as a model for ad-hoc wireless networks.
In continuum percolation,  geometric objects (grains) are deployed at random in space, and we consider the structure formed by their union (see \cite{meester_continuum_1996}).

We introduce a new higher-dimensional generalization of percolation phenomena using the language of algebraic topology.
In order to do so, we will be considering percolation in a finite (yet large) medium, where structures such as ``giant" connected components, one-arm events, and crossing components, may appear. Our main observation is that these formations are mostly \emph{topological} in nature, i.e.~they are concerned with qualitative aspects of connectivity rather than quantitative measures of geometry. In algebraic topology, connected components are considered ``$0$-dimensional cycles" (or rather equivalence classes of cycles), forming the first class in a sequence known as the \emph{homology groups}
$\set{\Hg_k}_{k\ge 0}$ (see Section \ref{sec:homology}). For example, elements in $\Hg_1$ ($1$-cycles) can be thought of as loops surrounding holes, elements in $\Hg_2$ ($2$-cycles) can be thought of as  surfaces enclosing cavities, and there is a general notion for $k$-dimensional cycles.
Our goal is to introduce a  notion of ``giant $k$-dimensional cycles," and explore the probability of these structures to appear.

We focus on the continuum percolation model where the grains are balls of a fixed (nonrandom) radius.
Suppose that we have a homogenous Poisson process $\cP_n$ with rate $n$ generated over a space $\cS$, and consider $\cO_r$ to be the union of balls of radius $r$ around $\cP_n$. We define as giant $k$-cycles in $\cO_r$ any $k$-cycle in $\cO_r$ that is also a $k$-cycle of $\cS$, i.e.~elements in the image of the map $\Hg_k(\cO_r)\to \Hg_k(\cS)$ (see Section \ref{sec:main} for formal definitions).
Note, that taking $\cS$ to be a box (or any compact and convex space) will be pointless, since a box has $\Hg_k=0$ (i.e.~no $k$-cycles) for all $k>0$. Instead, we focus on the $d$-dimensional flat torus $\T^d$, i.e.~the unit box $[0,1]^d$ with a periodic boundary. In this case, it is known that $\rank(\Hg_k(\T^d)) = \binom{d}{k}$, and therefore we expect to observe giant $k$-cycles emerging in $\cO_r$ for all $1\le k\le d$.

Our main result is the following. We define $E_k$ to be the event  that there \emph{exists} a giant $k$-cycle in $\cO_r$ (i.e.~$\im(\Hg_k(\cO_r)\to \Hg_k(\T^d)) \ne 0$), and $A_k$  to be the event  that $\cO_r$ contains \emph{all} giant $k$-cycles (i.e.~$\im(\Hg_k(\cO_r)\to \Hg_k(\T^d)) = \Hg_k(\T^d)$).
Similarly to the study of random geometric graphs \cite{penrose_random_2003}, we study the limit when $n\to\infty$ and $r = r(n)$ satisfies $nr^d = \lambda$ (i.e.~$r = (\lambda/n)^{1/d}$), known as the \emph{thermodynamic limit}. Our results show that there exist two sequences of threshold values $\lambda_{0,1}\le \cdots\le \lambda_{0,d-1}$, and $\lambda_{1,1}\le \cdots\le \lambda_{1,d-1}$, such that:
\begin{itemize}
  \item[(a)]  If $\lambda < \lambda_{0,k}$ then $\prob{E_k}\to 0$ exponentially fast.
  \item[(b)] If $\lambda > \lambda_{1,k}$ then $\prob{A_k}\to 1$ exponentially fast.
\end{itemize}
Clearly, $\lambda_{0,k}\le \lambda_{1,k}$ and we conjecture that these values are equal. We will prove equality for $k=1$, while for $k>1$ this remains an open problem.

\paragraph{Related work.} A few higher-dimensional percolation notions have been studied in the past. In \cite{aizenman_sharp_1983, grimmett_plaquettes_2010} the model of random plaquettes was studied, as a generalization for bond percolation models. The main idea here is to consider $\Z^d$, and instead of setting edges to be open/close, we do the same for the $k$-dimensional cubical faces ($2\le k \le d$).
The study in  \cite{aizenman_sharp_1983} focuses on $2$-dimensional plaquettes, and
asks  whether an arbitrarily large loop of edges in $\Z^d$ is covered by a $2$-dimensional surface of random plaquettes. Note, that loops that are not covered by any surface are exactly what we refer to as (nontrivial) $1$-cycles.
The results in \cite{grimmett_plaquettes_2010} consider $(d-1)$-dimensional plaquettes, and  address the formation of unoccupied spheres around the origin, which are related to entanglement. These spheres can also be thought of as $(d-1)$-cycles in homology. Using a similar cubical model, \cite{hiraoka_percolation_2018} studied percolation in a graph generated by neighboring $(d-1)$-cycles.

Another model is a generalization of the \erdren $G(n,p)$ random graph. Instead of a graph, we consider a simplicial complex (an object consisting of vertices, edges, triangles, tetrahedra and higher dimensional simplexes). The random $k$-complex $X_k(n,p)$ is generated by taking $n$ vertices, including all possible simplexes of dimensions $0,\ldots, k-1$, and setting the state of the $k$-simplexes independently as open with probability $p$, and closed otherwise.
In the $G(n,p)$ graph a giant component consisting of $\Theta(n)$ vertices is known to emerge when  $p=1/n$ \cite{erdos_evolution_1960}. The \emph{shadow} of a graph is the set of all edges not in the graph, whose addition to the graph generates a new cycle. When the giant component emerges, we observe that the shadow is \emph{giant}, i.e.~contains  a  fraction of the edges.
Similarly, in \cite{linial_phase_2016} it was shown that when $p=c/n$ for a known $c>0$, a giant $k$-shadow emerges in $X_k(n,p)$ where here the shadow refers to all the $k$-faces not in the complex, whose addition generates a new $k$-cycle.

The  phenomena above and the formation of giant $k$-cycles introduced in this paper are not obviously related. In particular, they are not directly comparable, as the plaquette model is infinite, the random $k$-complex has no underlying topology, and the torus we study here is both finite and has nontrivial homology.  Nontheless,  all these results describe different aspects of percolative behavior in higher dimensions. It is an interesting question whether any of these notions coincide for any particular model.

Finally, we note that the topological study of $\cO_r$ here is equivalent (via the Nerve Lemma \cite{borsuk_imbedding_1948}) to the study of the \emph{random \cech complex} \cite{kahle_random_2011}.
Recall that our results describe  the appearance of the giant $k$-cycles of the torus in the random process $\cO_r$.
A different transition related to $\cO_r$ (via the \cech complex), which can be referred to as \emph{homological connectivity}   \cite{bobrowski2019homological}, describes the stage where the $k$-th homology of $\cO_r$ not only contains all the $k$-cycles of the torus  ($\im(\Hg_k(\cO_r)\to \Hg_k(\T^d)) = \Hg_k(\T^d)$) but is completely \emph{identical} (isomorphic) to it (i.e.~$\Hg_k(\cO_r) \cong \Hg_k(\T^d)$). For the flat torus, it was shown in \cite{bobrowski2019homological} that a sharp phase transition for the $k$-th homological connectivity occurs when $nr^d = \frac{1}{\omega_d}(\log n + (k-1)\logg n)$, where $\omega_d$ is the volume of a unit ball in $\R^d$. Note that this regime is much denser than the thermodynamic limit we consider here ($nr^d = \text{const}$). For the top-dimensional homology ($\Hg_d$), the homological percolation and homological connectivity phase transitions are the same, and occur when $nr^d = \frac{1}{\omega_d}(\log n + (d-1)\logg n)$, which is also the coverage threshold. Hence, we do not consider the case of $k=d$ in this paper. With respect to the thermodynamic limit, we also note that several limit theorems have been proved in the past, for examples -- the Betti numbers \cite{yogeshwaran_random_2016}, the Euler characteristic \cite{thomas2019functional}, and the topological type distribution
\cite{auffinger_topologies_2018}.

\paragraph{Applied topology.}
While the study in this paper is mainly motivated and inspired by percolation theory, and the main goal is to seek higher dimensional analogue to percolation phenomena, we also want to highlight another interesting application of the results.

 The field of applied topology (or topological data analysis) promotes the use of mathematical topology in data and network analysis \cite{carlsson_topology_2009,edelsbrunner_computational_2010,ghrist2014elementary}
One of the most powerful tools  developed in this field is \emph{persistent homology} \cite{edelsbrunner_persistent_2008,zomorodian_topology_2005}.  Briefly, it is an algebraic tool that can be used to detect $k$-cycles that appear at different scales in observed data.
For example, given a point-cloud $\cX$, we can consider the filtration generated by the union of balls of varying radii $\set{B_r(\cX)}_{r=0}^\infty$. As we increase the radius, $k$-cycles can be formed (born), and later filled in (die). The $k$-th persistent homology, denoted $\PH_k(\cX)$ is the collection of all such $k$-cycles, where each cycle $\gamma$ is assigned with an interval $[\birth(\gamma), \death(\gamma))$ representing the range of scales (radii) in which the feature was observed, see Figure~\ref{fig:example1}.

\begin{figure}
  \centering
  \begin{subfigure}[t]{0.4\textwidth}
  \centering
 \includegraphics[width=0.715\textwidth]{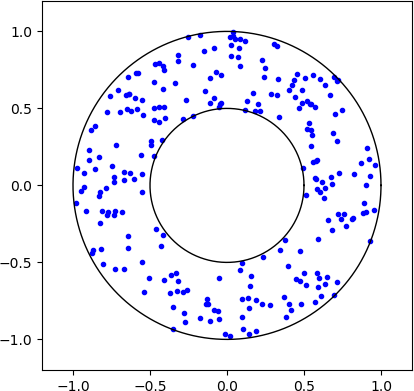}
  \caption{}
  \end{subfigure}
\hspace{25pt}
  \begin{subfigure}[t]{0.4\textwidth}
  \centering
  \includegraphics[width=0.9\textwidth]{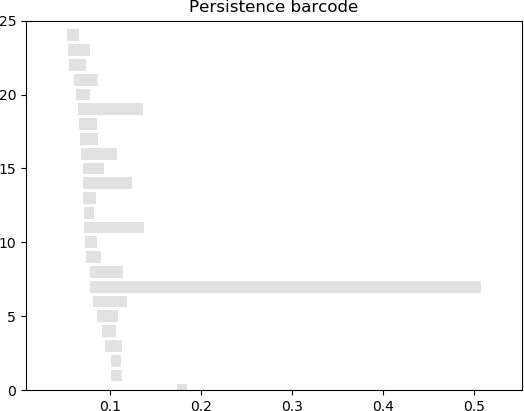}
  \caption{}
  \end{subfigure}
\caption{\label{fig:example1} Persistent homology for a random point cloud. (a) A random sample generated in an annulus whose inner radius is $0.5$ and outer radius is $1$. (b) We consider the persistent homology $\PH_1$ (i.e.~holes) generated by drawing balls of radius $r$ around the points, and increasing $r$. Each bar corresponds to a $1$-cycle, and its endpoints are the birth and death times (radii) of that cycle. Note that there is a single giant $1$-cycle here (representing the hole of the annulus), and its death time is roughly 0.5 (same as the inner radius).}
\end{figure}

One of the key problems in the field is to decide, among all the features in $\PH_k$, which ones represent statistically significant phenomena that one should look into (or the ``signal" underlying the data), and which are merely  artifacts of our finite sampling or other sources of randomness that should be ignored (``noise").  Over the years, several ideas have been proposed  (see the survey \cite{wasserman_topological_2018}), but none has grown into a robust statistical framework, so the problem is still very much open. The probabilistic analysis of persistent homology is highly challenging due to the potentially global and dependent nature of the algebraic-topological transformation. Nevertheless, over the past decade, some significant progress has been achieved \cite{adler_modeling_2017,hiraoka_limit_2018,owada2020convergence}. Considering \emph{individual}  cycles in $\PH_k$, the following theoretical result is the only one available to date.

For each $k$-cycle $\gamma\in \PH_k$ we can associate a measure of topological persistence by taking  $\pi(\gamma) := \death(\gamma)/\birth(\gamma)$. Suppose that the data $\cX$ are sampled over a space with a trivial homology (e.g.~a box, ball, etc.). In this case, all the cycles in $\PH_k$ should be considered as noise, since the signal is trivial. We define the extremal noise persistence as $\Pi_k(n) := \max_{\gamma\in \PH_k(\cX_n)} \pi(\gamma)$. The following result was proved in \cite{bobrowski_maximally_2017}.

\begin{thm}[\cite{bobrowski_maximally_2017}]\label{thm:max_cycles}
If $\cX = \cX_n$ is a homogeneous Poisson process with rate $n$,  then there exist $A,B > 0$, such that with high probability we have
\[
	A\param{\frac{\log n}{\logg n}}^{1/k} \le \Pi_k(n) \le B\param{\frac{\log n}{\logg n}}^{1/k}.
\]
\end{thm}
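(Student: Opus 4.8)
The plan is to prove the upper and lower bounds separately; each reduces to a deterministic geometric estimate feeding into a Poisson counting argument, and throughout we work with the \cech (equivalently, union-of-balls) filtration. Only the range $1\le k\le d-1$ is relevant, since $\PH_d(\cX_n)=0$ in $\R^d$.

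\textbf{Upper bound.} The heart of the matter is a purely geometric lemma: \emph{if $\gamma\in\PH_k(\cX_n)$ has $\birth(\gamma)=b$, $\death(\gamma)=d$, and a minimal representative at scale $b$ with $N$ vertices, then $d\le C\,b\,N^{1/k}$.} Indeed, that representative is (up to homology) a triangulated $k$-pseudomanifold all of whose edges have length $\le 2b$; for it to fail to bound up to scale $d$, the radius-$b$ balls around its $N$ vertices must cover a $k$-dimensional cycle of ``size'' $\asymp d$, and a $k$-dimensional set of size $d$ cannot be $b$-covered by fewer than $\asymp(d/b)^k$ balls. Hence a cycle of persistence $\pi(\gamma)\ge P$ forces a set $Y\subseteq\cX_n$ with $|Y|\ge cP^k$ packed into a shell of thickness $O(b)$ around a $k$-surface of diameter $\Theta(d)$, while (since $\gamma$ is non-bounding at scale $b$) a region of diameter $\Omega(d)$ that $\gamma$ wraps around is free of points. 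I would then union bound over a $\mathrm{poly}(n)$-size net of candidate ball centers and radii $\rho=\Theta(d)$: writing $m:=cP^k$ and $x:=n\rho^d$, the probability that a given $\rho$-ball contains $\ge m$ points, each confined to one of $m$ prescribed regions of volume $\asymp b^d=\rho^d m^{-d/k}$, and also has a point-free subregion of volume $\Omega(\rho^d)$, is at most $x^{m}\,m^{-dm/k}\,e^{-c'x}$ up to constants. Optimizing the exponent over $x$ (the void term wants $x$ small, the packing term wants $x$ large, balance at $x\asymp m$) gives a per-location bound $\exp\!\big(-(d/k-1)\,m\log m\,(1+o(1))\big)=\exp(-\Theta(m\logg n))$ in the regime $m=\Theta(\log n/\logg n)$. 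Multiplying by the net size, this tends to $0$ once $m\logg n\gtrsim\log n$; taking $B$ large enough that $m=B^k\log n/\logg n$ lies above this threshold yields $\Pi_k(n)\le B(\log n/\logg n)^{1/k}$ with high probability.

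\textbf{Lower bound.} Here I would run the construction in reverse. Fix $m=\lfloor a\log n/\logg n\rfloor$ with $a$ a small constant, and a ``template'' of $m$ points forming a fine geodesic triangulation of a $k$-sphere of radius $\rho$, arranged to wrap around an empty region of diameter $\asymp\rho$; by design the resulting \cech cycle is born at $b\asymp\rho m^{-1/k}$ and dies at $d\asymp\rho$, so its persistence is $\asymp m^{1/k}$. Partition the window into $\asymp n/m$ disjoint cells each large enough to host a copy of the template (so that the optimal scaling has $n\rho^d\asymp m$), and call a cell a \emph{success} if its Poisson points land $O(b)$-close to the template points and the wrapped region is empty. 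The success probability of a given cell is at least $(cn b^d)^{m} e^{-Cn\rho^d}\ge\exp(-\Theta(m\logg n))=n^{-\Theta(a)}$, with the implied constant computable explicitly. Since distinct cells are independent, the number of successes dominates a Binomial with mean $\asymp(n/m)\,n^{-\Theta(a)}\to\infty$ provided $a$ is below the explicit threshold $k/(d-k)$; hence with high probability some cell succeeds, producing a cycle of persistence $\ge A(\log n/\logg n)^{1/k}$ with $A\asymp a^{1/k}$. A second-moment computation (or Borel--Cantelli along a subsequence) upgrades this to the stated high-probability statement.

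\textbf{Main obstacle.} The crux is the geometric lemma behind the upper bound: rigorously turning ``persistence $\ge P$'' into ``$\gtrsim P^k$ points forced into a thin, void-wrapping shell.'' One must fix a minimal cycle representative at the birth scale, show that its vertex set cannot be $b$-covered by fewer than $\asymp(d/b)^k$ balls without the cycle bounding before scale $d$ (the inequality $\death(\gamma)\le C\,\birth(\gamma)\,(\#V(\gamma))^{1/k}$), and---since for $k<d-1$ the object a $k$-cycle wraps around is only $(d-k-1)$-dimensional---phrase the emptiness condition through Alexander duality and linking numbers rather than through a literal enclosed ball. On the lower-bound side, the parallel subtlety is verifying that an approximate spherical triangulation genuinely realizes the claimed birth and death radii: small perturbations of the template must neither create simplices that fill the cycle prematurely nor let the designated void be inadvertently covered.
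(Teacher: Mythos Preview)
The paper does not contain a proof of this statement. Theorem~\ref{thm:max_cycles} is quoted verbatim from \cite{bobrowski_maximally_2017} as background material (it is the only numbered statement in the introduction), and the present paper uses it only to motivate the applied-topology discussion in Section~\ref{sec:discuss}. There is therefore nothing in the paper to compare your proposal against.

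For what it is worth, your sketch is broadly in the spirit of the argument in \cite{bobrowski_maximally_2017}: the upper bound there does hinge on a deterministic isoperimetric-type inequality of the form $\death(\gamma)\le C\,\birth(\gamma)\,|\gamma|^{1/k}$ for a minimal cycle representative, combined with a union bound over locations and scales; the lower bound is indeed obtained by planting near-spherical templates in disjoint cells and using independence. Your identification of the main obstacle---making the ``many points near a thin shell surrounding a void'' heuristic rigorous, and handling the linking/duality subtlety when $k<d-1$---is accurate. But none of this is carried out, or even alluded to, in the present paper.
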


In other words, this result provides us with the asymptotic rate of the most persistent \emph{noisy} cycle. While \cite{bobrowski_maximally_2017} proved this result in a box, we note that this result will hold for any smooth compact manifold, as long $\Pi_k(n)$ is taken over the noisy cycles (i.e. ignoring precisely the giant cycles we study in this paper). With this result in hand, an obvious question is then -- how does the scaling in Theorem \ref{thm:max_cycles} compares to the persistence of the \emph{signal} (giant) cycles? Notice that the death of a signal cycle (in the limit) is  non-random, and depends on the geometry of the underlying space only. For example, sampling from an annulus, then the death time of the giant $1$-cycle is the inner radius  (asymptotically), see Figure \ref{fig:example1}. Therefore, in order to estimate the persistence ratio of the signal cycles, we need to evaluate their \emph{birth time}. The results in this paper provide the correct scaling for these birth times, and by that can be used to highlight the asymptotic differences between signal and noise in geometric models (see Section \ref{sec:discuss}).

\section{Preliminaries}\label{sec:prelim}

\subsection{Homology}\label{sec:homology}
Homology is an algebraic invariant which characterizes spaces and functions using groups and homomorphisms.
In a nutshell, if $X$ is a topological space, we have a sequence of groups $\set{\Hg_k(X)}_{k\ge 0}$, where loosely speaking, the generators of $\Hg_0(X)$ correspond to the connected components of $X$, the generators of $\Hg_1(X)$ correspond to
closed loops surrounding holes in $X$, $\Hg_2(X)$ corresponds to surfaces enclosing voids in $X$, and in general the elements of $\Hg_k(X)$ are considered nontrivial $k$-dimensional cycles. We refer the reader to ~\cite{hatcher_algebraic_2002} for more precise definitions, as for the most part we will not require it. We assume homology is computed using field coefficients, denoted by $\mathbb{F}$, and then the homology groups are simply vector spaces and the dimension of these vector spaces are known as the Betti numbers, denoted $\beta_k(X)$.

In this paper, we limit ourselves to the case of the $d$-dimensional torus.  The homology groups in this case are $\Hg_k \cong \F^{\binom{d}{k}}$, where $\F$ is the field of coefficients we use.
See Figure \ref{fig:torus}(a) for the case $d=2$. More concretely, we will study the \emph{flat torus} $\T^d = \R^d / \Z^d$, which is a topological torus with a locally flat metric. A useful way to think of $\T^d$ is using the unit box $Q^d = [0,1]^d$ with a periodic boundary, i.e.~$\T^d = Q^d / \{0\sim 1\}$. In this case, we can view the cycles in $\Hg_k(\T^d)$  as follows. Let $\gamma_{k,1} := \param{[0,1]^k\times \set{0}^{d-k}} / \set{0\sim 1}$, i.e.~we take a $k$-dimensional face of the $Q^d$, with the periodic boundary of the torus. Then each $\gamma_{k,1}$  introduces a $k$-dimensional cycle in $\T^d$. For each $k$, we can similarly generate a basis for $\Hg_k(\T^d)$ $\set{\gamma_{k,1},\gamma_{k,2},\ldots,\gamma_{k,\binom{d}{k}}}$ by taking $k$-faces of $Q^d$ in all possible  $\binom{d}{k}$ directions (i.e.~that are not parallel). We call these cycles the ``essential cycles" of the torus $\T^d$. See Figure \ref{fig:torus}(b). The careful reader should note that by cycle, we are referring to a cycle representative of a non-trivial homology class.

In addition to describing the properties of a single space $X$, homology groups can also be used to study functions between spaces. For a given function $f:X\to Y$ we have a collection of  induced maps $f_*:\Hg_k(X)\to \Hg_k(Y)$, that describe what happens to every $k$-cycle in $X$ after applying $f$. As $\Hg_k(X),\Hg_k(Y)$ are vector spaces, $f_*$ are linear transformations.

\begin{figure}[ht]
\centering
\begin{subfigure}{0.45\textwidth}
\centering
\includegraphics[scale=0.2]{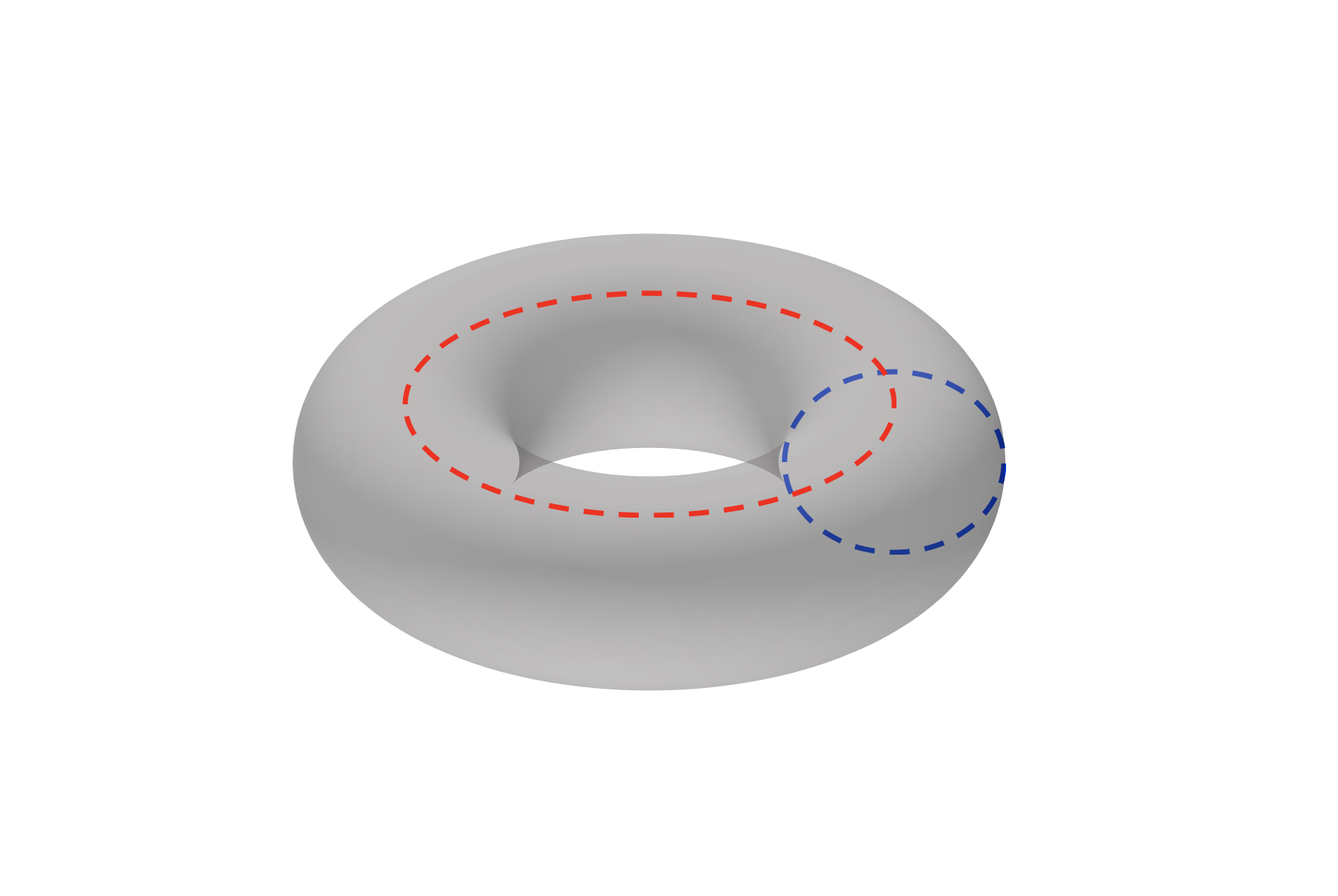}
\caption{}
\end{subfigure}
\begin{subfigure}{0.45\textwidth}
\centering
\includegraphics[scale=0.2]{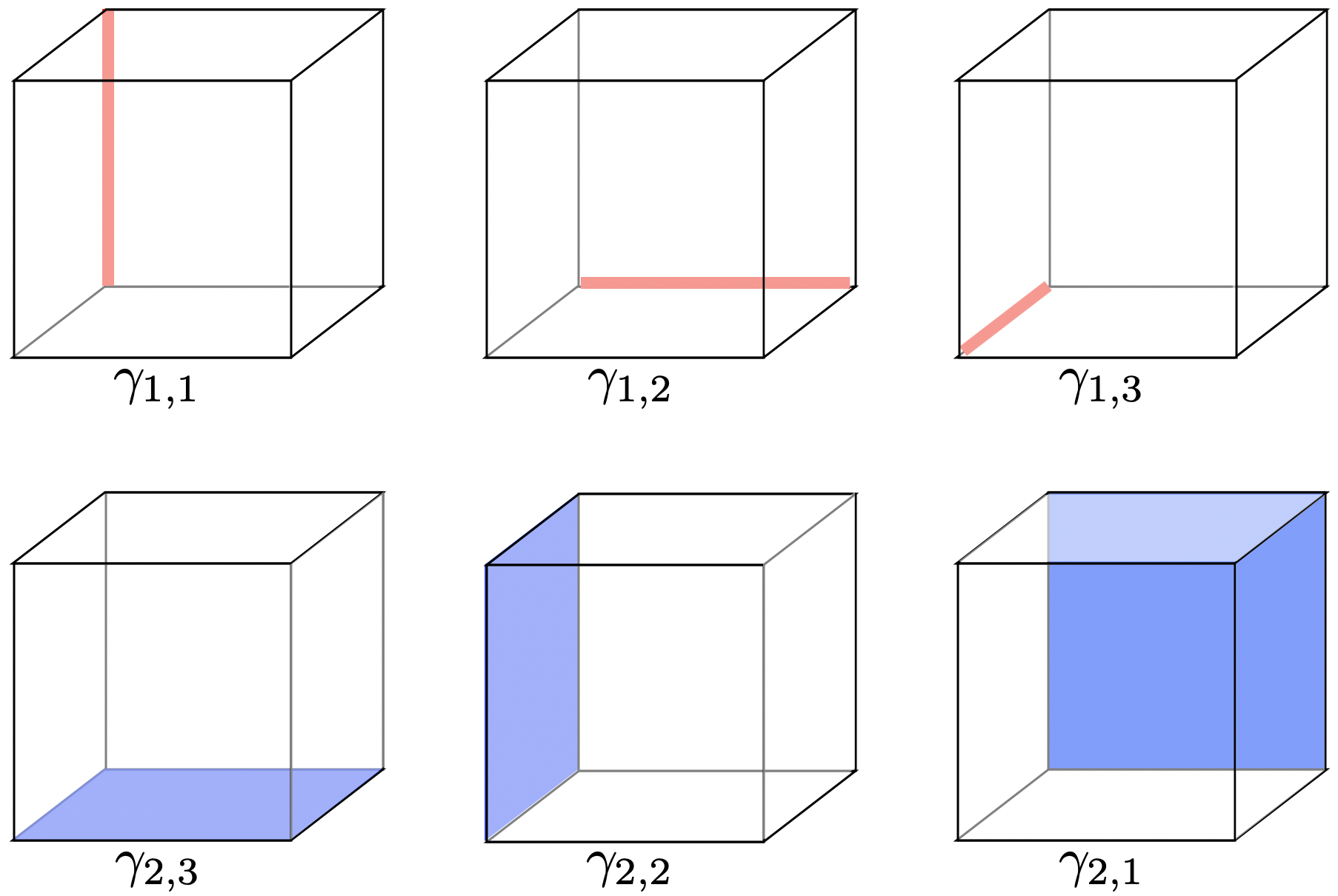}
\caption{}
\end{subfigure}
\caption{\label{fig:torus} The homology of the torus. (a) The $2d$ torus as a manifold. There is a single connected component -- $\Hg_0\cong \F$, two independent $1$-cycles (dashed line) -- $\Hg_1\cong \F^2$, and a single ``air pocket" -- $\Hg_2 \cong \F$. (b) The $3d$ flat torus $\T^3 = [0,1]^3/\set{0\sim 1}$, where $\Hg_1\cong \F^3$ and $\Hg_2\cong \F^3$. On the first row we mark the essential $1$-cycles, and on the second row the essential $2$-cycles. The columns are ordered so that the $2$-cycle at the bottom is the dual (via Lemma \ref{lem:duality}) of the $1$-cycle above.}
\end{figure}

\subsection{Continuum percolation}\label{sec:cont_perc}

Percolation theory focuses primarily on the formation of infinite components in random media.
In \emph{continuum percolation} (see \cite{meester_continuum_1996}), the medium is generated by geometric objects (grains) placed at random in space. In its simplest form, we have a homogeneous Poisson process in $\R^d$ with rate $\lambda$, denoted $\cP_\lambda$, and the grains are  fixed-size balls. We define the \emph{occupancy} and \emph{vacancy} processes as
\[
	\cO := \bigcup_{p\in \cP_\lambda} B_1(p)\quad\text{and}\quad \cV := \R^d \bs \cO,
\]
where $B_r(p)$ is the ball of radius $r$ around $p$.

The fundamental results in percolation theory are concerned with probability to form an infinite component. To this end, we define the event when the origin is part of an infinite component in $\cO$ as $I_0$, where `infinite' could refer to either the diameter, volume or the number of points (cf.~\cite{meester_continuum_1996}). Similarly, we define $\bar I_0$ for  the vacancy $\cV$. Next, we define the percolation probabilities
\[
	\theta(\lambda) := \prob{I_0},\quad \text{and}\quad \bar\theta(\lambda) := \prob{\bar I_0},
\]
and the percolation thresholds
\[
	\lambda_c := \inf\set{\lambda : \theta(\lambda) >0 },\quad\text{and}\quad \bar\lambda_c:= \sup\set{\lambda: \bar\theta(\lambda) > 0}.
\]
A fundamental result in continuum percolation then states that for all $d\ge 2$ we have
\[
	0 < \lambda_c \le \bar\lambda_c < \infty,
\]
with equality for $d=2$ \cite{roy1990russo}, and a strict inequality for $d>2$ \cite{sarkar_co-existence_1997}.

The critical values $\lambda_c,\bar\lambda_c$ were shown to control various phenomena related to the connected components of the occupancy/vacancy processes. Of a particular interest to us will be those related to crossing paths in a finite box. Define $W_n := [-\frac{n}{2}, \frac{n}{2}]^d$, and let $\cO(n),\cV(n)$ be the occupancy and vacancy processes generated by the points in $\cP_\lambda \cap W_n$. For every $n$, these processes are finite, and we can ask whether either of them contains a path that crosses the box from one side to the other. This question is interesting mainly in the limit as $n\to\infty$.

In the theory of \emph{random geometric graphs} \cite{penrose_random_2003}, an alternative and nearly equivalent model is studied, which will be useful for us in this paper. Instead of taking the growing box $W_n$ and a fixed radius $r=1$, we take the fixed unit box $Q^d = [0,1]^d$, consider the homogeneous Poisson process of rate $n$ in this box $\cP_n$, and study the processes
\eqb\label{eqn:occ_vac}
\cO_r := \bigcup_{p\in \cP_n} B_r(p),\quad\text{and}\quad \cV_r := Q\bs \cO_r.
\eqe
To make the models equivalent we set $n_\lambda = (n/\lambda)^{1/d}$, and note that the limiting behavior of the processes $\cO(n),\cV(n)$ (in $W_n$) is same as $\cO(n_\lambda),\cV(n_\lambda)$ (in $W_{n_\lambda}$, for any fixed $\lambda >0$). In addition, by a scaling argument -- taking $\cP_\lambda$ in $W_{n_\lambda}$ with balls of radius $1$ is equivalent to taking $\cP_n$ in $Q^d$ with balls of radius $r=(\lambda/n)^{1/d}$. In other words, the processes $\cO(n_\lambda)$ and $n_\lambda \cO_r$ have the same distribution (up to translation).
To conclude, we will consider the model in \eqref{eqn:occ_vac}, under the condition
\eqb\label{eqn:TD}
nr^d = \lambda,
\eqe
for a fixed $\lambda \in (0,\infty)$.
Note that this implies that $r = (\lambda/n)^{1/d} \to 0$ as $n\to \infty$.

To prove our main result, we will need the following statements that are adapted from the continuum percolation literature.
For any two sets $A,B\subset Q$ we denote by $A \stackrel{\cO_r}{\longleftrightarrow}B$ the event that there exists a path in the occupancy process that connects a point in $A$ to a point in $B$. Similarly, we define $A \stackrel{\cV_r}{\longleftrightarrow}B$ for the vacancy processes.

The first statements we need are about the exponential decay of the one-armed probabilities.

\begin{prop}\label{prop:exponential_decay}
Let $c$ be the center point of the cube $Q$, and $\partial B_{R}(c)$ be the boundary of the ball of radius $R<1/2$ centered at $c$.\\
If $\lambda < \lambda_c$, there exists $C_1>0$ (possibly depends on $\lambda$) such that
\[
	\prob{c \stackrel{\cO_r}{\longleftrightarrow} \partial B_{R}(c)} \le e^{-C_1 R n^{1/d}}.
\]
If $\lambda > \bar\lambda_c$, there exists $C_2>0$ (possibly depends on $\lambda$) such that
\[
	\prob{c \stackrel{\cV_r}{\longleftrightarrow} \partial B_{R}(c)} \le e^{-C_2 R n^{1/d}}.
\]
\end{prop}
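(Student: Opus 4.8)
The plan is to derive Proposition~\ref{prop:exponential_decay} from the classical subcritical/supercritical exponential decay estimates in continuum percolation by a straightforward scaling argument, exactly along the lines of the model equivalence described around \eqref{eqn:occ_vac}--\eqref{eqn:TD}. Recall that in the ``fixed radius, growing box'' picture one has $\cP_\lambda$ in $\R^d$ with unit balls, and the occupancy process $\cO = \bigcup_{p\in\cP_\lambda} B_1(p)$. The standard fact (for $\lambda<\lambda_c$) is that the radius of the occupied cluster containing the origin has an exponentially decaying tail: there is $C_1=C_1(\lambda)>0$ with $\prob{0 \stackrel{\cO}{\longleftrightarrow} \partial B_\rho(0)} \le e^{-C_1\rho}$ for all $\rho>0$; this is a theorem of Menshikov/Aizenman--Barsky type carried over to the Boolean model (see \cite{meester_continuum_1996}), and the Cauchy distribution of the bounded radii is irrelevant since the grains are balls of fixed size. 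Dually, for $\lambda>\bar\lambda_c$ the vacant cluster of the origin has an exponentially decaying radius tail, $\prob{0 \stackrel{\cV}{\longleftrightarrow}\partial B_\rho(0)}\le e^{-C_2\rho}$.

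First I would fix $\lambda\in(0,\infty)$ and set $n_\lambda := (n/\lambda)^{1/d}$, so that $r = (\lambda/n)^{1/d} = 1/n_\lambda$. The scaling map $x\mapsto n_\lambda x$ sends the unit cube $Q^d$ to $W_{n_\lambda}=[-\tfrac{n_\lambda}{2},\tfrac{n_\lambda}{2}]^d$ (up to translation), sends $\cP_n$ in $Q^d$ to a rate-$\lambda$ Poisson process in $W_{n_\lambda}$, and sends each ball $B_r(p)$ to $B_1(n_\lambda p)$; hence $n_\lambda\cO_r \stackrel{\cL}{=} \cO(n_\lambda)$, the occupancy process of $\cP_\lambda\cap W_{n_\lambda}$ with unit balls, and likewise for vacancy. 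Under this map the center $c$ of $Q$ goes to (a translate of) the center of $W_{n_\lambda}$, and $\partial B_R(c)$ goes to $\partial B_{R n_\lambda}(c')$. Therefore
\[
\prob{c \stackrel{\cO_r}{\longleftrightarrow}\partial B_R(c)}
 = \prob{c' \stackrel{\cO(n_\lambda)}{\longleftrightarrow}\partial B_{Rn_\lambda}(c')}
 \le \prob{0 \stackrel{\cO}{\longleftrightarrow}\partial B_{Rn_\lambda}(0)},
\]
where the last inequality just drops the restriction of the Poisson process to the finite box $W_{n_\lambda}$ (restricting the point set can only shrink the occupied cluster). Applying the subcritical exponential decay estimate with $\rho = Rn_\lambda = R\,n^{1/d}\lambda^{-1/d}$ gives the bound $e^{-C_1 R n^{1/d}\lambda^{-1/d}}$; absorbing $\lambda^{-1/d}$ into the constant (allowed since $C_1$ may depend on $\lambda$) yields the stated inequality. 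The vacancy case is identical, using $\lambda>\bar\lambda_c$ and the supercritical vacant-cluster decay, with the one wrinkle that the vacant region of $\cO(n_\lambda)$ inside $W_{n_\lambda}$ is \emph{larger} than that of the full-space $\cO$; but since $R<1/2$ the ball $B_R(c)$ is compactly contained in the interior of $Q$, so the vacant connection from $c$ to $\partial B_R(c)$ takes place well away from $\partial Q$ and agrees with the full-space event on that region — monotonicity goes the right way once one intersects with $B_{1/2}(c)$.

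The only genuine point requiring care — and what I'd flag as the main obstacle — is pinning down the precise form of the classical inputs and making sure they apply verbatim. Specifically: (i) confirming that for the Boolean model with deterministic bounded grains the subcritical radius-tail decay is exponential (it is, and it is the continuum analogue of sharpness of the phase transition); and (ii) handling the vacancy side carefully, since near $\partial Q$ the torus/box boundary makes $\cV_r$ bigger than the whole-space vacancy, and one must restrict attention to the ball $B_{1/2}(c)\subset Q^\circ$ where the two processes coincide before invoking the whole-space estimate — the condition $R<1/2$ in the statement is exactly what makes this clean. Beyond that the argument is a bookkeeping exercise in the scaling $r\mapsto 1$, $n^{1/d}$-rescaling of lengths, with no real analytic content.
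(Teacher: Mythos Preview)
Your proposal is correct and follows essentially the same approach as the paper: both invoke the classical full-space exponential decay for the subcritical occupied (resp.\ supercritical vacant) cluster radius and then rescale by $r=(\lambda/n)^{1/d}$ to obtain the stated bound, absorbing the $\lambda^{-1/d}$ factor into the constant. The paper cites Duminil-Copin et al.\ for the input and simply writes ``similarly'' for the vacancy case; your treatment is slightly more careful in that you spell out the monotonicity for the box restriction and note that $R<1/2$ keeps the vacant connection event away from $\partial Q$, so the finite-box and whole-space vacancies agree on the relevant region.
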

\begin{proof}
This is merely a scaled version of Theorem 2 and 4 in \cite{duminil-copin_subcritical_2018}. In \cite{duminil-copin_subcritical_2018} it is proved that if $\lambda<\lambda_c$ then for any $\tilde R>0$ we have
\[
	\prob{0\stackrel{\cO}\longleftrightarrow \partial B_{\tilde R}(0)} \le e^{-c_\lambda \tilde R},
\]
for some $c_\lambda>0$. Scaling  by $r$ and shifting by $c$, we have
\[
	\prob{c\stackrel{\cO_r}\longleftrightarrow \partial B_{\tilde Rr}(c)} \le e^{-c_\lambda \tilde R}.
\]
Finally, since $r = (\lambda/n)^{1/d}$ we have
\[
	\prob{c\stackrel{\cO_r}\longleftrightarrow \partial B_{R}(c)} \le e^{-c_\lambda R/r} = e^{-C_1  R n^{1/d}}.
\]
Similarly, we can prove the statement for $\cV_r$.

\end{proof}
%
The next statement we need is about the crossing paths and uniqueness of the giant component.

\begin{prop} \label{prop:penrose_sup}
Suppose that $d\ge 2$ and $\lambda > \lambda_c$. Take any $D \le 1$. Denote by $E$ the events that:
\begin{enumerate}
\item There exists a unique component of $\cO_r$ that crosses the box $Q$ in \emph{all} directions.
\item The diameter of all other components in $\cO_r$ is at most $D$.
\end{enumerate}
Then there exists $C_3>0$ (possibly depends on $\lambda$), so that
\[
	\prob{E} \ge 1-e^{-C_3 D n^{1/d}}.
\]
\end{prop}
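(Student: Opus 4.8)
The plan is to reduce Proposition~\ref{prop:penrose_sup} to known facts about the (super-critical) Boolean occupancy process on large boxes, combined with a block/renormalization argument. Throughout, use the equivalence recorded in \eqref{eqn:TD}: the process $\cO_r$ in $Q^d$ with $nr^d = \lambda$ is, after rescaling by $n^{1/d}$, the fixed-radius occupancy process $\cO$ of intensity $\lambda$ restricted to the box $W_{n^{1/d}}$ of side $L := n^{1/d}$. So it suffices to prove: for $\lambda > \lambda_c$ there is $C_3 = C_3(\lambda) > 0$ such that, with probability at least $1 - e^{-C_3 \tilde D}$ (where $\tilde D = D L$ is the rescaled diameter bound), the box $W_L$ contains a unique occupancy cluster crossing $W_L$ in every coordinate direction, and all other clusters have (Euclidean) diameter at most $\tilde D$.

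First I would set up the renormalization. Partition $W_L$ into disjoint sub-boxes of side $\ell$ for a large fixed constant $\ell = \ell(\lambda)$, and declare a sub-box \emph{good} if a certain local event holds — the standard choice being that the occupancy process restricted to the sub-box and its neighbours contains a cluster crossing the doubled box in all directions, and this crossing cluster is unique in that region. By the finite-energy/insertion-tolerance properties of the Poisson Boolean model and the fact that the probability of crossing a box of side $\ell$ tends to $1$ as $\ell\to\infty$ when $\lambda > \lambda_c$ (this is precisely the content of the super-critical phase as developed in Penrose's book \cite{penrose_random_2003} and in the continuum-percolation literature \cite{meester_continuum_1996}), one can make $\prob{\text{sub-box is good}}$ as close to $1$ as desired by choosing $\ell$ large. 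Crucially, goodness of a sub-box depends only on the Poisson points in a bounded neighbourhood, so the good/bad indicators form a finite-range dependent field, which dominates a high-density Bernoulli percolation on the coarse lattice; a Peierls-type argument then controls bad regions.

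Next I would use the coarse-graining to deduce the three features. (i) \emph{Existence and gluing:} good sub-boxes form crossing clusters whose crossing sub-clusters in overlapping neighbours are forced to coincide, so a connected set of good sub-boxes spanning $W_L$ yields one occupancy cluster crossing $W_L$ in all directions; since the bad sub-boxes occupy only a vanishing fraction, a spanning good cluster exists except with probability exponentially small in $L$ (hence in $\tilde D$, as $\tilde D \le L$). (ii) \emph{Uniqueness:} two distinct macroscopic crossing clusters would have to be separated by a $\ast$-connected surface of bad sub-boxes of diameter $\gtrsim L$, whose probability is $e^{-c L}$ by the Peierls bound. (iii) \emph{Diameter of the other clusters:} if some occupancy cluster other than the giant one had diameter $> \tilde D$, it would either connect two good sub-boxes (contradiction: those belong to the giant cluster) or be ``trapped'' inside a $\ast$-connected region of bad sub-boxes of diameter $\gtrsim \tilde D$; again the Peierls estimate gives probability $\le e^{-c \tilde D}$. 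Taking a union bound over these events and choosing $C_3$ appropriately (and noting there are at most polynomially-in-$L$ many locations, which is absorbed by the exponential) finishes the proof; translating back via the $n^{1/d}$ scaling replaces $\tilde D$ by $D n^{1/d}$.

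The main obstacle is item (iii) together with making the block event encode \emph{uniqueness of the crossing cluster locally} rather than mere existence of a crossing — one needs the good-block event strong enough that crossing sub-clusters in adjacent blocks are genuinely the same cluster (so the glued object is a single component and nothing macroscopic can hide from it), yet local and likely enough that the renormalized field is highly supercritical. This is exactly the kind of block construction carried out for supercritical Bernoulli percolation (Pisztora/Deuschel–Pisztora-style renormalization) and adapted to the continuum in \cite{meester_continuum_1996, penrose_random_2003}; the remaining work is bookkeeping: verifying that Euclidean diameter of a non-giant cluster is comparable to the $\ast$-diameter of the enclosing bad region (up to the fixed factor $\ell$), and that the Peierls sum over bad $\ast$-clusters of large diameter converges and is $e^{-\Theta(\tilde D)}$. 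I do not expect any genuinely new idea to be required beyond citing and assembling these standard supercritical tools.
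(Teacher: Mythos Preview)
Your renormalization sketch is sound in outline and is indeed the standard route to such supercritical uniqueness/large-deviation statements. However, the paper does not prove this proposition from scratch: its proof is simply to cite Proposition~2 of Penrose~\cite{penrose_large_1996}, which already asserts that in the box $W_n$ with $\lambda>\lambda_c$ and any $\log n \ll \phi_n \le n$, there is a unique crossing component with all others of diameter $<\phi_n$, with probability $\ge 1-e^{-C_3\phi_n}$; then rescale via $\phi_n \leftrightarrow D n^{1/d}$ to get the stated bound. In other words, the paper treats the content you are sketching as a black box.

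What each approach buys: the paper's proof is three lines but inherits the constraint $\phi_n \gg \log n$ from the cited result, i.e.\ it actually only gives the bound for $D \gg n^{-1/d}\log n$ (which is all that is needed downstream, where $D$ is a fixed constant like $1/6$). Your block argument is self-contained and makes the mechanism transparent, but ends up with the same implicit constraint: the polynomial-in-$L$ union bound over positions is only ``absorbed by the exponential'' when $\tilde D \gg \log L$, i.e.\ again $D \gg n^{-1/d}\log n$. So you are essentially reconstructing the proof of the result the paper cites; there is nothing wrong with that, but for the purposes of this paper a one-line citation plus the rescaling $r = (\lambda/n)^{1/d}$ suffices.
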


\begin{proof}
We use a scaled version of Proposition 2 in \cite{penrose_large_1996}. For the cube $W_n$ it is proved in \cite{penrose_large_1996} that when $\lambda > \lambda_c$,  for any $\log n \ll \phi_n \le n$ the probability that there exists a unique giant component in $\cO(n)$, crossing the box $W_n$ in all directions, and that all other components have diameter less than $\phi_n$ is bounded from below by $1-e^{-C_3\phi_n}$ for some $C_3>0$. Scaling from $\cO(n)$ to $\cO_r$, then implies that for $n^{-1/d}\log n \ll D \le 1$ we have $\prob{E} \ge 1-e^{-C_3 D n^{1/d}}$.
\end{proof}

\begin{rem}
While Theorem \ref{prop:penrose_sup} is stated for a cube, the  proof in \cite{penrose_large_1996} applies for a box of any fixed-size dimensions.
\end{rem}


\section{Main result}\label{sec:main}

Throughout this paper we let $d\ge 2$ be fixed and consider the \emph{flat torus} $\T^d = \R^d / \Z^d$ (see Section \ref{sec:homology}).
Let $\set{X_1,X_2,\ldots}$ be iid random variables uniformly distributed in $\T^d$, let $N\sim \pois{n}$ be another independent variable, and define $\cP_n := \set{X_1,\ldots, X_N}$.
Then $\cP_n$ is  a homogeneous  Poisson process on $\T^d$ with rate $n$, and define the occupancy and vacancy processes as above by
\[
\cO_r := \bigcup_{p\in \cP_n} B_r(p),\quad\text{and}\quad\cV_r := \T^d \bs \cO_r,
\]
where we use balls with respect to the toroidal metric (which is locally flat).

In order to define the giant $k$-cycles, we consider the inclusion maps
$i:\cO_r \to \T^d$ and $\bar i: \cV_r\to \T^d$
and consider their induced maps (homomorphisms) in homology,
\[
\io : \Hg_k(\cO_r)\to \Hg_k(\T^d),\quad\text{and}\quad  \iv : \Hg_k(\cV_r)\to \Hg_k(\T^d).
\]
Loosely speaking, the image of $\io$ (resp.~$\iv$) corresponds to the $k$-cycles of the tours that have a representative element in $\cO_r$ (resp.~$\cV_r$). We define the $k$-th \emph{homological percolation} events as
\[
	E_k := \{\im(\io) \ne 0\},\quad \text{and}\quad A_k := \{\im(\io) = \Hg_k(\T^d)\},
\]
and similarly for vacancy we define $\bar E_k, \bar A_k$.
The event $E_k$ asserts that at least one of the $k$-cycles of the torus is represented in $\cO_r$, while $A_k$ asserts that all  of them are.

 In Figure~\ref{fig:experiments_1}, we observe $1$-cycles that realize the event $A_1$ in the 2-dimensional torus and the 3-dimensional torus. In Figure~\ref{fig:experiments_2} we show the $2$-cycles that realize $A_2$ for the 3-dimensional torus. One important remark is that the examples illustrate that  the giant cycles need not be simple, e.g. top-to-bottom or left-to-right.

\begin{figure}
  \centering
  \begin{subfigure}{0.4\textwidth}
  \centering
  \includegraphics[width=0.85\textwidth]{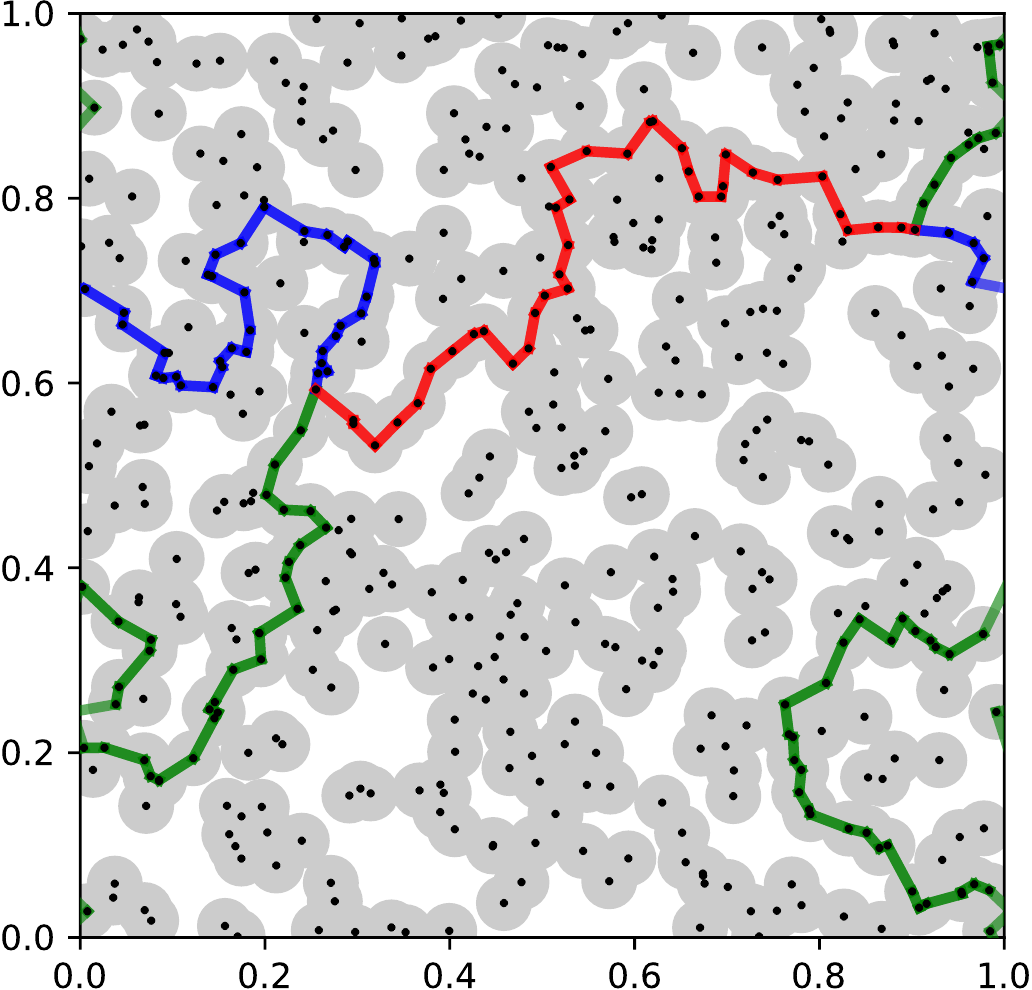}
  \caption{}
  \end{subfigure}
  \hspace{25pt}
  \begin{subfigure}{0.4\textwidth}
  \centering
  \includegraphics[width=\textwidth]{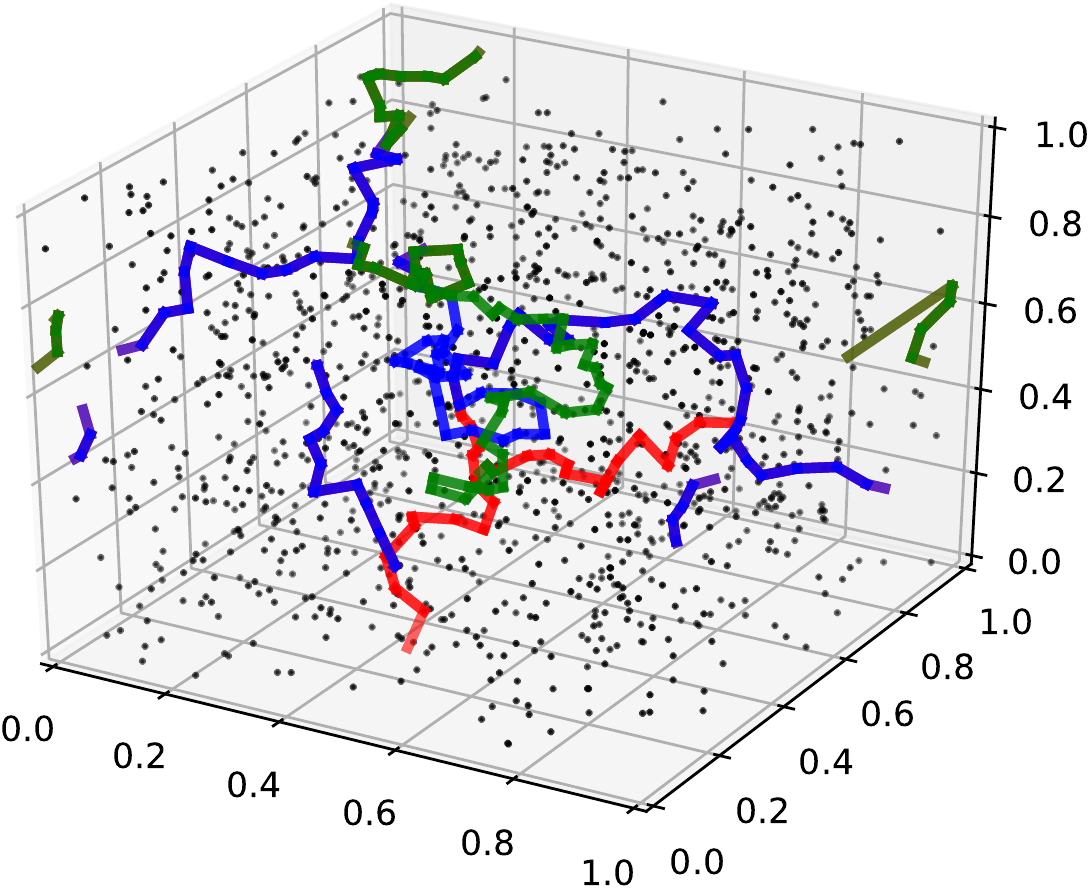}
  \caption{}
  \end{subfigure}
  \caption{\label{fig:experiments_1} The formation of giant $1$-cycles in the flat torus. (a) We plot realizations of the $1$-cycles generated by random balls in $\T^2$ (box with periodic boundary), where we have two giant cycles. The first cycle consists of the green+red paths, and the second cycle consists of the blue+red paths. (b) We plot the $1$-cycles generated in $\T^3$, where we have three of them. To simplify the picture we do not show the balls here, only the paths  (red,gree,blue) that correspond to the =$1$-cycles (note that the cycles may overlap).}
 \end{figure}

\begin{figure}
  \centering
  \begin{subfigure}{0.3\textwidth}
  \centering
  \includegraphics[width=0.9\textwidth]{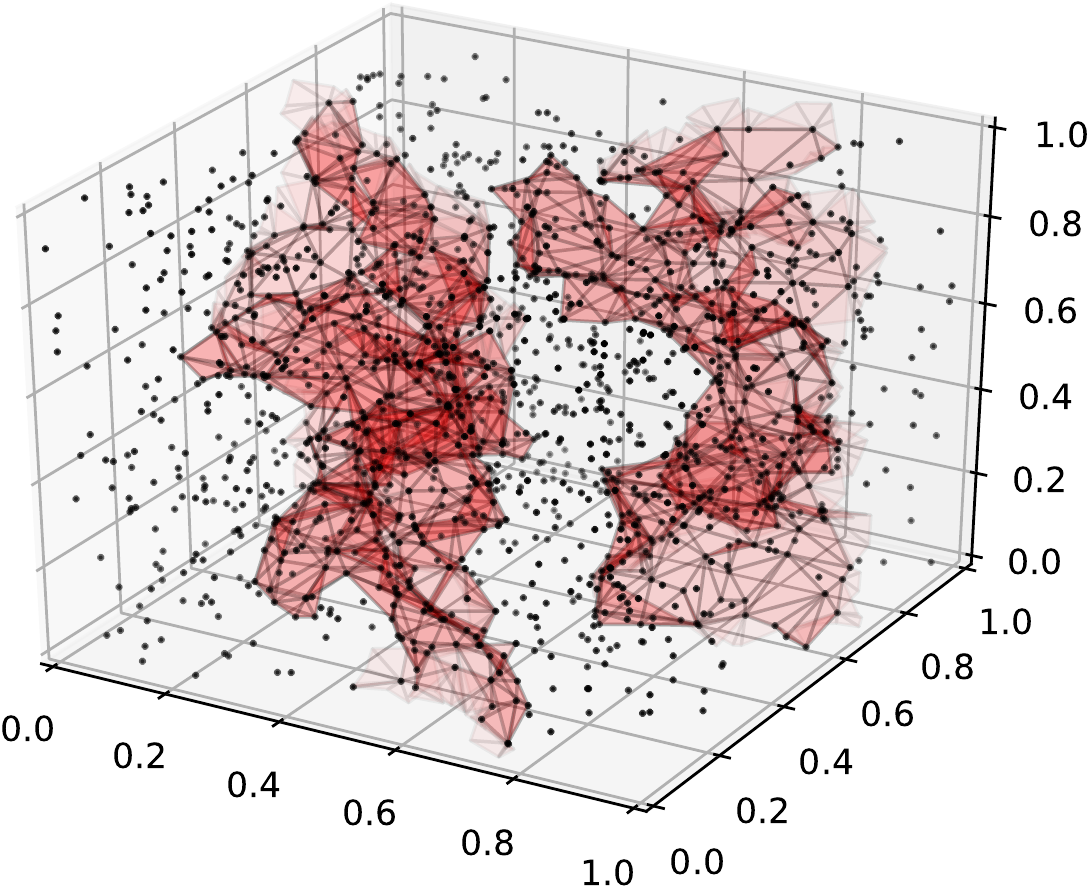}
  \caption{}
  \end{subfigure}
  \begin{subfigure}{0.3\textwidth}
  \centering
  \includegraphics[width=.9\textwidth]{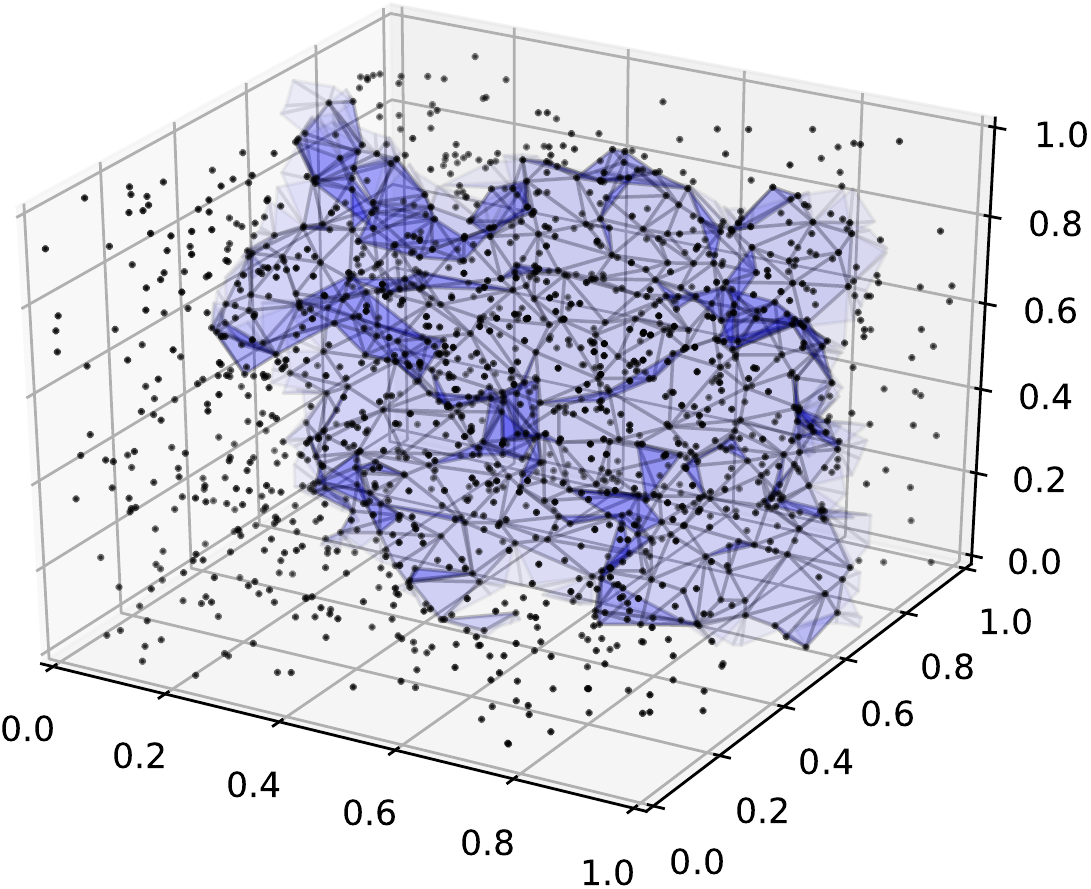}
  \caption{}
  \end{subfigure}
 \begin{subfigure}{0.3\textwidth}
  \centering
  \includegraphics[width=.9\textwidth]{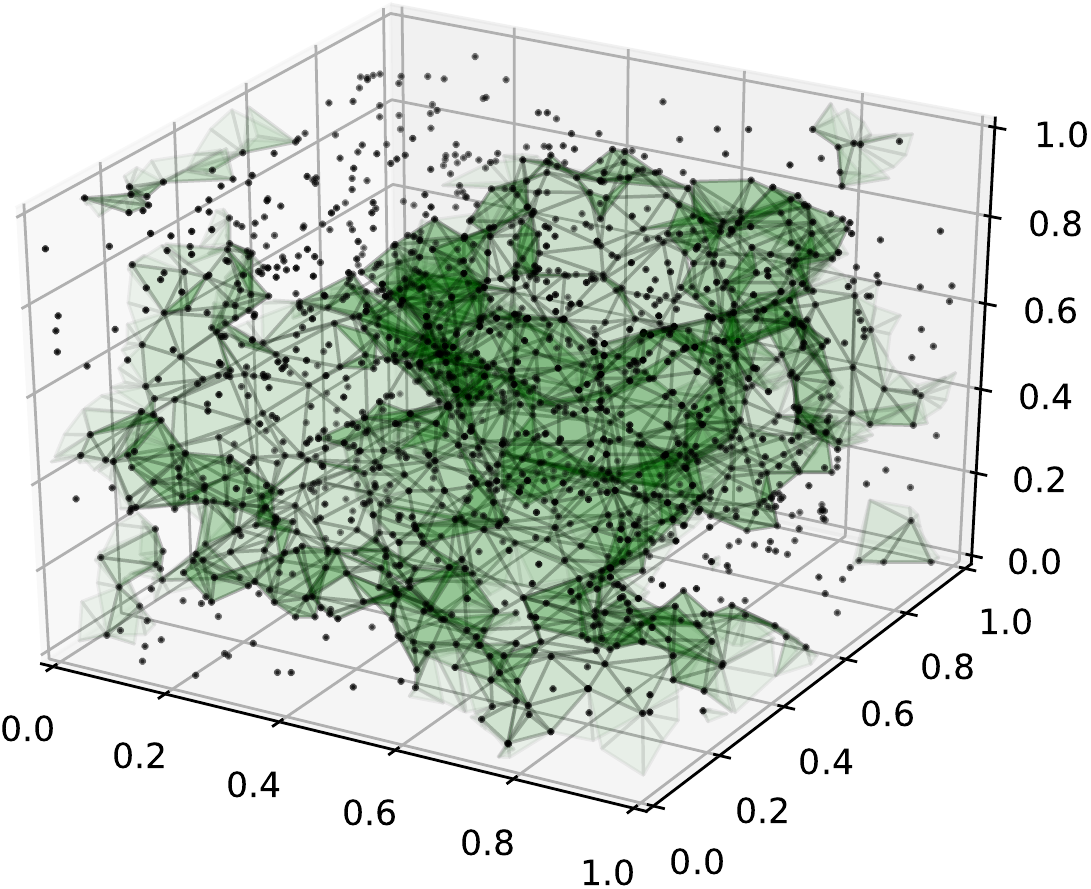}
  \caption{}
  \end{subfigure}
  \caption{\label{fig:experiments_2} The formation of giant $2$-cycles in the flat torus. Here we take $\T^3$ ($[0,1]^3$ with periodic boundary) and draw the giant $2$-cycles formed by the union of balls over a random sample. To simplify the picture we only show a triangulated version (the nerve) of the balls generating the $2$-cycles. Each of the 2d surfaces presented is a $2$-cycles, meaning that it encloses a cavity in the structure.}
 \end{figure}

 We can now state the  main result of the paper, which considers the limiting probability of the events $E_k,A_k$ as $n\to \infty$.
\begin{thm}\label{thm:main}
Let $d\ge 2$, and let $nr^d = \lambda$. Then there exist two sequences
\eqb\label{eqn:lambdas}
0<\lambda_{0,1} \le \lambda_{0,2}  \le \cdots \le \lambda_{0,d-1} < \infty,\quad \text{and}\quad 0<\lambda_{1,1} \le \lambda_{1,2}  \le \cdots \le \lambda_{1,d-1} < \infty,
\eqe
with $\lambda_{0,k} \le \lambda_{1,k}$, such that the following holds.\\
If $\lambda < \lambda_{0,k}$ then
\eqb\label{eqn:lim1}
\limsup_{n\to\infty} n^{-1/d}\log \prob{A_k} \le \limsup_{n\to\infty}  n^{-1/d}\log \prob{E_k}  < 0,
\eqe
and if $\lambda > \lambda_{1,k}$ then
\eqb\label{eqn:lim2}
\limsup_{n\to\infty}  n^{-1/d}\log{(1-\prob{E_k})} \le \limsup_{n\to\infty}  n^{-1/d}\log{(1-\prob{A_k})} < 0.
\eqe
Further, we have that $\lambda_{0,1} = \lambda_{1,1} = \crito$, and $\lambda_{1,d-1} \le \critv$, where $\lambda_c,\bar\lambda_c$ are the critical values for continuum percolation discussed in Section \ref{sec:prelim}.
\end{thm}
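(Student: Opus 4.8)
The plan is to build everything on top of the two percolation inputs already quoted: the exponential decay of one-arm events (Proposition~\ref{prop:exponential_decay}) for the lower bounds, and the uniqueness/crossing statement (Proposition~\ref{prop:penrose_sup}) for the upper bounds. The key conceptual device is a Poincar\'e--Alexander--Lefschetz style duality on the torus, which I expect to be stated as a separate lemma (referred to above as Lemma~\ref{lem:duality}): a $k$-cycle of $\cO_r$ maps onto an essential class $\gamma_{k,j}$ of $\T^d$ precisely when the ``dual'' $(d-k)$-dimensional essential class is \emph{not} realized in the complementary vacancy set $\cV_r$, i.e.\ the event $E_k$ (realizing a particular essential $k$-cycle in $\cO_r$) is complementary — up to the usual care with cycles versus classes — to $\bar E_{d-k}$ (realizing the dual essential $(d-k)$-cycle in $\cV_r$). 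Concretely, I would show: $\cO_r$ carries the essential class $\gamma_{k,j}$ $\iff$ no component of $\cV_r$ ``wraps around'' in the complementary $(d-k)$ directions, which in turn is controlled by connectivity/crossing events for $\cO_r$ and $\cV_r$ in a slab of the torus.

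The existence thresholds are then defined by the standard subadditivity/monotonicity recipe. For each essential direction, $\prob{\text{$\gamma_{k,j}$ realized in }\cO_r}$ is monotone increasing in $\lambda$ (more points only helps realize a cycle), so one sets $\lambda_{0,k} := \inf\{\lambda : \liminf_n \prob{\text{some essential $k$-cycle realized}} > 0\}$ and $\lambda_{1,k} := \inf\{\lambda : \liminf_n \prob{A_k} > 0\}$, or more precisely as the thresholds past which the exponential rates in \eqref{eqn:lim1}, \eqref{eqn:lim2} become negative; the equalities $E_k \supseteq$ (realize one direction) and $A_k =$ (realize all $\binom{d}{k}$ directions) give $\lambda_{0,k} \le \lambda_{1,k}$ immediately, and a union bound over the $\binom{d}{k}$ directions shows the two thresholds govern the same phase (so it suffices to analyze a single essential direction). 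Monotonicity in $k$, i.e.\ $\lambda_{0,k} \le \lambda_{0,k+1}$, I would get by a projection/suspension argument: a realized essential $(k+1)$-cycle, intersected with a generic coordinate hyperplane of the torus (which is a copy of $\T^d$ of one lower ``wrapping complexity''), forces a realized essential $k$-cycle on strictly cheaper geometry — so realizing $(k+1)$-cycles is harder. The finiteness $\lambda_{0,k}, \lambda_{1,k} < \infty$ and positivity $\lambda_{0,1} > 0$ follow from the two halves of Theorem~\ref{thm:main} once the quantitative bounds below are in place, together with $0 < \lambda_c \le \bar\lambda_c < \infty$.

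For the quantitative bounds themselves: \textbf{(lower bound, $\lambda < \lambda_{0,k}$).} If an essential $k$-cycle is realized in $\cO_r$, then in particular $\cO_r$ contains a connected ``sheet'' that spans the torus in at least one coordinate direction; covering the torus by $O(1)$ balls of radius $R < 1/2$ and applying the one-arm bound $\prob{c \stackrel{\cO_r}{\longleftrightarrow} \partial B_R(c)} \le e^{-C_1 R n^{1/d}}$ of Proposition~\ref{prop:exponential_decay} at a suitable fixed $R$, a union bound over the $O(1)$ centers yields $\prob{E_k} \le e^{-C R n^{1/d}}$ for $\lambda < \lambda_c$, hence $\lambda_{0,k} \ge \lambda_c$, in fact $\lambda_{0,1} = \lambda_c$ since for $\lambda > \lambda_c$ the giant component of $\cO_r$ crosses the torus in all directions (Proposition~\ref{prop:penrose_sup}) and therefore carries every essential $1$-cycle. \textbf{(upper bound, $\lambda > \lambda_{1,k}$).} Here I invoke duality: for $\lambda > \bar\lambda_c$, Proposition~\ref{prop:exponential_decay} gives exponential decay of vacancy one-arms, so w.h.p.\ every vacancy component has diameter $< D$, hence $\cV_r$ realizes \emph{no} essential cycle in any dimension; by the duality lemma this forces $\im(\io) = \Hg_k(\T^d)$ for every $k$, giving $\lambda_{1,k} \le \bar\lambda_c$ with the stated rate $e^{-C_3 D n^{1/d}}$. \textbf{Main obstacle.} The genuinely hard step is the duality lemma and, with it, translating the \emph{algebraic} events $\{\im(\io) = \Hg_k(\T^d)\}$ into the \emph{geometric} connectivity events that percolation theory controls — in particular making precise that ``$\cO_r$ fails to carry the essential $k$-cycle $\gamma_{k,j}$'' is equivalent to ``$\cV_r$ carries the dual essential $(d-k)$-cycle'', uniformly in $r = r(n)$, and handling the boundary/transversality issues of intersecting random balls with the coordinate subtori. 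Everything else is a bookkeeping assembly of the two black-box percolation propositions.
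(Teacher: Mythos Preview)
Your overall architecture matches the paper's: one-arm decay for the subcritical side, a duality lemma linking $A_k$ for $\cO_r$ to $\bar E_{d-k}$ for $\cV_r$, and a slicing argument for monotonicity in $k$. However, several of your sketched steps contain genuine gaps that the paper handles with nontrivial additional work.

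\textbf{The crossing-to-loop step is missing.} You write that for $\lambda>\lambda_c$ ``the giant component of $\cO_r$ crosses the torus in all directions (Proposition~\ref{prop:penrose_sup}) and therefore carries every essential $1$-cycle.'' Proposition~\ref{prop:penrose_sup} is a statement about a Euclidean box, not the torus, and a crossing path in a box is \emph{not} automatically a nontrivial loop in $\T^d$: its two endpoints need not be identified under the quotient. The paper resolves this by covering $\T^d$ (viewed as $[0,4/3]\times[0,1]^{d-1}/\Z^d$) with three overlapping Euclidean boxes $R_0,R_1,R_2$, invoking the crossing-plus-uniqueness statement in each, and then using uniqueness to argue that the crossing path in $R_i$ must lie in the same component as the crossing path in $R_j$ on the overlap $R_{i,j}$. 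This gluing is what produces a closed loop homologous to an essential $1$-cycle; without it you have not shown $\prob{A_1}\to 1$.

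\textbf{The $O(1)$-ball covering is too coarse.} For the subcritical side you propose to cover $\T^d$ by $O(1)$ balls of fixed radius $R$ and apply the one-arm bound at their centers. But the one-arm event in Proposition~\ref{prop:exponential_decay} is $\{c \stackrel{\cO_r}{\longleftrightarrow}\partial B_R(c)\}$, which requires the \emph{specific point} $c$ to lie in $\cO_r$. A giant cycle passing through $B_R(c)$ need not touch $c$. The paper instead takes a grid of spacing $\eps r$ (so $O(n)$ grid points), fine enough that every $r$-ball in $\cO_r$ contains a grid point; then the cycle must contain some grid point $x_0$, and one applies the one-arm bound there. The union bound over $O(n)$ points is harmless against the $e^{-Cn^{1/d}}$ decay. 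The same issue arises on the vacancy side, where the paper uses small boxes $Q_{\eps r}(x)$ rather than points (since $\cV_r$ components can be arbitrarily small) and proves an auxiliary FKG-based lemma to pass from box-to-sphere to point-to-sphere.

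\textbf{Monotonicity in $k$ is not just slicing.} Your ``intersect a realized $(k+1)$-cycle with a coordinate hyperplane to get a $k$-cycle'' is the right picture, but intersecting $\cO_r$ with a subtorus $\T^d_i$ gives a $k$-cycle in the \emph{sliced} process $\cO_r^{(i)}$, not obviously in $\cO_r$ itself, and transversality of a random union of balls with a fixed hyperplane is delicate. The paper's actual argument is indirect: it shows $\bigcap_i A_k^{(i)}\subset A_k$ from the K\"unneth structure of $\Hg_*(\T^d)$, then applies duality \emph{inside the $(d-1)$-torus} $\T^d_i$ to get $A_k^{(i)}=(\bar E_{d-1-k}^{(i)})^c$, and combines these with the analogous inclusion $\bigcup_i \bar E_{d-1-k}^{(i)}\subset \bar E_{d-1-k}$ and duality in $\T^d$ to conclude $A_{k+1}\subset A_k$. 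The duality is doing real work here, not just the slicing.
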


In other words, the theorem implies that there exist $C_{0,k},C_{1,k}>0$ (possibly depending on $\lambda$) such that  for $\lambda < \lambda_{0,k}$ and for large enough $n$ we have
\[
\prob{A_k}\le \prob{E_k} \le e^{-C_{0,k}n^{1/d}},
\]
and for $\lambda > \lambda_{1,k}$, for large enough $n$,
\[
\prob{E_k} \ge \prob{A_k} \ge 1-e^{-C_{1,k} n^{1/d}}.
\]
This, in particular, implies that
\[
	\limninf \prob{E_k} = 	\limninf \prob{A_k} = \begin{cases} 1 & \lambda > \lambda_{1,k},\\
	0 & \lambda < \lambda_{0,k}. \end{cases}
\]
The main conclusion from Theorem \ref{thm:main} is that the giant cycles of all dimensions $0< k<d$ appear within the \emph{thermodynamic limit} (i.e.~$nr^d = \text{const}$). This observation is not so obvious, since forming giant $k$-cycles require $\cO_r$ to cover large $k$-dimensional surfaces, while the process itself is still relatively sparse. For example, \emph{homological connectivity} -- the phase when $\Hg_k(\cO_r) \cong \Hg_k(\T^d)$, occurs at a much later stage, when $nr^d \sim \log n$ \cite{bobrowski2019homological}. Note that $k=d$ is excluded from the theorem. The $d$-cycle of the torus can only appear in $\cO_r$ upon coverage, which also occurs when $nr^d \sim \log n$ \cite{bobrowski2019homological}.

Another conclusion from the theorem is that the appearance of the giant $k$-cycles occurs in an orderly fashion, increasing in $k$. In addition, all the cycles are formed in the interval $[\crito,\critv]$. Further, once the giant component in $\cO_r$ appears (at $\crito$) it already includes (w.h.p.) all the giant $1$-cycles, and hence $\lambda_{0,1} = \lambda_{1,1} =  \crito$. This behavior will be made clearer in the proof.

Note that Theorem \ref{thm:main} provides a sharp phase transition only for the case of $k=1$, and the inequalities between the thresholds are not strict. However, since sharpness is a key property in most percolation models \cite{aizenman1987sharpness,menshikov1986coincidence,duminil-copin_sharp_2017}, we  believe that a stronger statement is true here as well. The proof of this statement will remain as future work.
\begin{con}\label{conj}
For all $0<k<d$ we have $\lambda_{0,k} = \lambda_{1,k} := \lambda_k$, and in addition
\[
\lambda_c = \lambda_1 < \lambda_2 < \cdots < \lambda_{d-1} = \bar \lambda_c.
\]
\end{con}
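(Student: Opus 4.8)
\textbf{Proof proposal for Conjecture \ref{conj}.}

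The plan is to establish the two qualitatively distinct claims separately: (i) the sharpness $\lambda_{0,k} = \lambda_{1,k} =: \lambda_k$ for each $0 < k < d$, and (ii) the strict chain of inequalities $\lambda_c = \lambda_1 < \lambda_2 < \cdots < \lambda_{d-1} = \bar\lambda_c$. For the sharpness, I would first reduce the problem to a statement about a single essential cycle direction: by symmetry of $\T^d$ under the hyperoctahedral group, the probability that a \emph{specific} essential $k$-cycle $\gamma_{k,j}$ is realized in $\cO_r$ has the same threshold for all $j$, so the event $A_k$ (all realized) and the event ``some fixed $\gamma_{k,j}$ is realized'' share thresholds up to the $\binom{d}{k}$-fold union bound, which does not affect the exponential rate. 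The heart of the sharpness argument would then be an OSSS-type or randomized-algorithm inequality (in the style of Duminil-Copin--Raoufi--Tassion, as used in \cite{duminil-copin_sharp_2017} for continuum percolation), applied to the increasing Boolean-type function $\ind\{\gamma_{k,j}\text{ is realized in }\cO_r\}$. The key input is a bound on the \emph{revealment} of a suitable random exploration algorithm that decides whether $\gamma_{k,j}$ is covered, showing each Poisson point is queried with probability controlled by a one-arm/crossing probability; combined with the fact that below the threshold these crossing probabilities are summable (Proposition \ref{prop:exponential_decay}-type estimates applied to the ``dual'' obstruction — a vacancy surface blocking the cycle), one obtains a differential inequality forcing a sharp transition. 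I expect this revealment estimate to be the main obstacle: unlike in standard percolation, the event here is genuinely $(d-k)$-codimensional and the natural ``witness'' to its failure is a vacancy $(d-k)$-cycle (by Alexander duality on $\T^d$, see Lemma \ref{lem:duality}), so one must run the exploration on the \emph{dual} vacancy process and carefully argue that a point far from any realized obstruction is rarely queried.

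For the endpoint identifications, I would argue $\lambda_1 = \lambda_c$ as already done for Theorem \ref{thm:main} (the giant component of $\cO_r$ wraps around the torus in all directions at $\lambda_c$, realizing all of $\Hg_1$), and dually $\lambda_{d-1} = \bar\lambda_c$: for $\lambda > \bar\lambda_c$ the vacancy $\cV_r$ percolates and by duality kills every $(d-1)$-cycle obstruction, while for $\lambda < \bar\lambda_c$ a toroidal vacancy crossing persists and blocks some essential $(d-1)$-cycle. The one subtlety is upgrading Theorem \ref{thm:main}'s $\lambda_{1,d-1} \le \bar\lambda_c$ to equality, i.e.\ showing $\lambda_{0,d-1} \ge \bar\lambda_c$; this follows from the sharp-threshold duality once part (i) is in hand, since below $\bar\lambda_c$ the vacancy has an unbounded component w.h.p.\ and an explicit geometric argument (a vacant $(d-1)$-dimensional ``slab'' separating $\cO_r$) shows $\im(\io)$ cannot be all of $\Hg_{d-1}(\T^d)$, hence $A_{d-1}$ fails, and by sharpness $E_{d-1}$ fails too.

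The strict inequalities $\lambda_k < \lambda_{k+1}$ are the most delicate point, and I would attack them by a coupling/finite-energy argument showing that at $\lambda = \lambda_{k+1}$ the event $E_k$ already holds w.h.p.\ with room to spare. Concretely, suppose for contradiction that $\lambda_k = \lambda_{k+1} = \lambda$. By the definition of $\lambda_{k+1}$ as a sharp threshold (part (i)), an essential $(k{+}1)$-cycle is realized in $\cO_r$ with probability bounded away from $0$ and $1$ near $\lambda$; a realized $(k{+}1)$-cycle $\gamma_{k+1,j}$, being a $(k{+}1)$-dimensional surface covered by the balls, contains many realized $k$-subcycles (its ``$k$-dimensional slices''), so $E_k$ must hold with probability close to $1$ at $\lambda$ — but also, by continuity of the underlying crossing probabilities and the fact that $\lambda_k$ is a threshold for $E_k$, a small perturbation $\lambda - \delta$ should still have $E_k$ holding, contradicting $\lambda_k = \lambda$ being the infimum. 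Making this rigorous requires quantifying ``many realized $k$-subcycles'', which I expect to be the second main obstacle: one needs that a toroidal $(k{+}1)$-surface in $\cO_r$ robustly projects onto a toroidal $k$-surface in $\cO_{r'}$ for $r' < r$, an essentially deterministic topological claim (slicing $\gamma_{k+1,j}$ by a coordinate hyperplane of the torus and using a Mayer--Vietoris / transfer argument) combined with the monotonicity $nr^d \mapsto$ coverage. The endpoint strict inequality $\lambda_c < \lambda_2$ (for $d \ge 3$) and $\lambda_{d-2} < \bar\lambda_c$ then follow from $\lambda_c < \bar\lambda_c$ (which holds for $d > 2$ by \cite{sarkar_co-existence_1997}) together with the interior strictness.

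\begin{rem}
Both obstacles — the dual revealment bound and the quantitative $(k{+}1)$-to-$k$ slicing — are genuinely higher-dimensional phenomena with no $1$-dimensional precedent, which is why we leave Conjecture \ref{conj} open; the $k=1$ case avoids them because $\Hg_0$ percolation is an ordinary connectivity event for which sharpness is classical and no slicing below is needed.
\end{rem}
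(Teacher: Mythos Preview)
This statement is a \emph{conjecture} in the paper, explicitly left open; the paper proves neither sharpness for $k>1$ nor any strict inequality, and it flags the missing vacancy-uniqueness statement as the obstruction even to $\lambda_{d-1}=\bar\lambda_c$. Your proposal is accordingly a strategy outline rather than a proof, and your closing remark concedes this. That said, two of your steps contain concrete errors beyond the admitted obstacles.

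For the endpoint $\lambda_{d-1}=\bar\lambda_c$, you have the duality reversed. By Corollary~\ref{cor:duality}, $A_{d-1}$ fails if and only if $\bar E_1$ holds, so the obstruction to $A_{d-1}$ is a vacant \emph{$1$-cycle} (a vacant loop winding once around $\T^d$), not a ``vacant $(d{-}1)$-dimensional slab''; a vacant essential $(d{-}1)$-surface witnesses $\bar E_{d-1}$ and hence obstructs $A_1$, not $A_{d-1}$. Once the dimension is corrected, producing such a vacant loop for every $\lambda<\bar\lambda_c$ requires the gluing argument of Lemma~\ref{lem:proof_1} applied to $\cV_r$, and that argument hinges on uniqueness of the crossing component --- precisely the vacancy-uniqueness statement the paper singles out as unavailable. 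So this endpoint does not follow from your sharpness program alone.

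For the strict inequalities $\lambda_k<\lambda_{k+1}$, your argument is circular. The claim that a realized $(k{+}1)$-cycle ``contains many realized $k$-subcycles'' is exactly Lemma~\ref{lem:inclusions} (slicing by coordinate sub-tori), which yields only $E_{k+1}\subset E_k$ and hence $\lambda_k\le\lambda_{k+1}$. Your contradiction step --- that $E_k$ ``should still hold'' at $\lambda-\delta$ by continuity --- presupposes that the threshold for $E_k$ lies strictly below $\lambda$, which is the assertion at issue; nothing in the slicing construction prevents $\prob{E_k}$ and $\prob{E_{k+1}}$ from transitioning at the same value. A genuine separation would require exhibiting, for some $\lambda$, that $E_k$ holds w.h.p.\ while $E_{k+1}$ fails w.h.p., and your proposal supplies no mechanism for this.
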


\section{Proofs}

In this section we prove Theorem \ref{thm:main}. We start by defining
\eqb\label{eqn:lambda_k}
\splitb
\lambda_{0,k} &= \sup\set{\lambda : \limsup_{\ninf} n^{-1/d}\log\prob{E_k} < 0}, \\
 \lambda_{1,k} &= \inf\set{\lambda : \limsup_{\ninf} n^{-1/d} \log(1-\prob{A_k})<0}.
\splite
\eqe
In case the first set is empty, we set $\lambda_{0,k }= -\infty$, and in case the second set is empty we set $\lambda_{1,k} = \infty$ (we will show later that neither set is empty).
Note that by definition, \eqref{eqn:lim1} and \eqref{eqn:lim2} hold. From the definitions we also have $\lambda_{0,k}\le \lambda_{1,k}$ for all $k$, since if $\prob{A_k}\to 1$ then surely $\prob{E_k} \not\to 0$.
 Thus, in order to prove Theorem \ref{thm:main} we have to show that all thresholds are in $(0,\infty)$ and are increasing in $k$ as in \eqref{eqn:lambdas}.
We will break the proof of Theorem \ref{thm:main} into three parts.  We start by proving that $\lambda_{0,1} = \lambda_{1,1} = \lambda_c > 0$. Next, we prove that $\lambda_{1,d+1}\le \bar \lambda_c < \infty$. Finally, we prove that for all $1\le k \le d-2$ we have $\lambda_{0,k} \le \lambda_{0,k+1}$, and $\lambda_{1,k} \le \lambda_{1,k+1}$.  That will conclude the proof.

\subsection{Giant $1$-cycles}

Our goal in this section is to prove the following lemma.

\begin{lem}\label{lem:proof_1}
The thresholds for the giant $1$-cycles satisfy
 $\lambda_{0,1} = \lambda_{1,1} = \lambda_c$.
\end{lem}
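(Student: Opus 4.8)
I would prove the two inclusions $\lambda_{0,1}\ge\lambda_c$ (i.e.\ $\prob{E_1}\to 0$ subcritically, with exponential rate) and $\lambda_{1,1}\le\lambda_c$ (i.e.\ $\prob{A_1}\to 1$ supercritically, with exponential rate); together with the already-established $\lambda_{0,1}\le\lambda_{1,1}$ and the monotonicity of probabilities this pins both thresholds at $\lambda_c$.

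\textbf{Subcritical side ($\lambda<\lambda_c$).} The key geometric observation is that any nontrivial $1$-cycle in $\cO_r\subset\T^d$ must have a representative loop that, when lifted to the universal cover $\R^d$, is a path of diameter at least $1$ (it has to ``wrap around'' the torus). Concretely, if $\im(\io)\ne 0$ then some connected component of $\cO_r$ contains a toroidal loop that is non-contractible, so lifting one ball of that component to $\R^d$, the connected occupancy cluster through that ball must reach a point at Euclidean distance $\ge \tfrac12$ from it. I would cover $\T^d$ by $O(r^{-d}) = O(n)$ balls of radius, say, $R=1/4$ centered on a fine grid (or just union-bound over the $O(1)$ grid of radius-$1/4$ balls is not enough — I need the cluster to cross a fixed-size annulus somewhere, so I cover by $O(n^{?})$ translates; actually a fixed finite cover suffices if I note any wrapping cluster must cross {\em some} fixed annulus $B_{1/4}(c_j)\leftrightarrow \partial B_{1/4\cdot 2}$ — more carefully I take a bounded number of reference points $c_1,\dots,c_M$, $M$ depending only on $d$, such that any non-contractible loop forces a one-arm crossing from some $c_j$ to $\partial B_{R}(c_j)$ for a fixed $R<1/2$). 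Then
\[
\prob{E_1}\;\le\;\sum_{j=1}^{M}\prob{c_j \stackrel{\cO_r}{\longleftrightarrow}\partial B_R(c_j)}\;\le\; M\, e^{-C_1 R\, n^{1/d}},
\]
by Proposition~\ref{prop:exponential_decay}. This gives $\limsup_n n^{-1/d}\log\prob{E_1}\le -C_1 R<0$, hence $\lambda_{0,1}\ge\lambda_c$. The one subtlety to get right is the combinatorial/covering argument showing a bounded number of fixed-size one-arm events captures all wrapping clusters; I'd also need $R$ bounded away from $0$ and $1/2$, which is fine.

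\textbf{Supercritical side ($\lambda>\lambda_c$).} Here I use Proposition~\ref{prop:penrose_sup}: with probability $\ge 1-e^{-C_3 D n^{1/d}}$ there is a unique giant component $\Gamma$ of $\cO_r$ crossing $Q^d=[0,1]^d$ in all $d$ coordinate directions, and all other components have diameter $\le D$. (I apply the box version, lifting $\T^d$ to its fundamental domain $Q^d$; crossing $Q^d$ in direction $j$ yields, after re-gluing the periodic boundary, a non-contractible loop in $\cO_r\subset\T^d$ in the $j$-th direction.) I claim that on this event $\im(\io)=\Hg_1(\T^d)$, i.e.\ $A_1$ holds. The reason: a cluster that crosses $Q^d$ in direction $e_j$ closes up in $\T^d$ to a loop whose class is $\gamma_{1,j}$ (the $j$-th essential $1$-cycle) — possibly plus lower-order corrections, but since a single connected cluster crosses in {\em all} $d$ directions simultaneously, one can extract from $\Gamma$ loops realizing each of the $d$ basis classes $\gamma_{1,1},\dots,\gamma_{1,d}$, and together they span $\Hg_1(\T^d)\cong\F^d$. (The cleanest way to see the crossing-gives-essential-class statement is a linking/intersection-number argument: a direction-$e_j$ crossing path in $Q^d$ has intersection number $1$ with the hyperplane-cycle $\{x_j=\text{const}\}$ in $\T^d$, which is Poincaré–Lefschetz dual to $\gamma_{1,j}$, so its class pairs nontrivially with the dual basis and hence is nonzero in the $\gamma_{1,j}$-coordinate; doing this for all $j$ shows the $d$ classes are independent.) Taking any fixed $D\le 1$, say $D=1$, we get $\prob{A_1}\ge 1-e^{-C_3 n^{1/d}}$, so $\limsup_n n^{-1/d}\log(1-\prob{A_1})\le -C_3<0$, hence $\lambda_{1,1}\le\lambda_c$.

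\textbf{Conclusion and main obstacle.} Combining, $\lambda_c\le\lambda_{0,1}\le\lambda_{1,1}\le\lambda_c$, so $\lambda_{0,1}=\lambda_{1,1}=\lambda_c$, which is Lemma~\ref{lem:proof_1}. The routine parts are the union bounds and the quotation of Propositions~\ref{prop:exponential_decay} and~\ref{prop:penrose_sup}. The part requiring genuine care — and the main obstacle — is the purely topological bookkeeping on both sides: (i) subcritically, that {\em nonzero} image of $\io$ forces a long-range occupancy connection visible through a {\em bounded} family of fixed-size one-arm events (so that the constant $M$ does not grow with $n$); and (ii) supercritically, that a single component crossing $Q^d$ in all directions actually surjects onto $\Hg_1(\T^d)$ rather than merely hitting a proper subspace — this is where the intersection-pairing / Poincaré duality argument (cf.\ Lemma~\ref{lem:duality}) does the real work. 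Everything else is a scaling argument already packaged in the cited propositions.
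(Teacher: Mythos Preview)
Your overall strategy matches the paper's: show $\lambda_{0,1}\ge\lambda_c$ via subcritical one-arm decay and $\lambda_{1,1}\le\lambda_c$ via supercritical crossing. The subcritical side is essentially right (with a caveat below), but the supercritical side has a genuine gap.

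\textbf{Supercritical gap.} The claim that ``a cluster that crosses $Q^d$ in direction $e_j$ closes up in $\T^d$ to a loop whose class is $\gamma_{1,j}$'' is false in general. A crossing path in $Q^d$ from the face $\{x_j=0\}$ to $\{x_j=1\}$ has no reason to start and end at points that become identified in $\T^d$, so it need not close to a loop at all. Concretely, in $\T^2$ take a tree-shaped component with four arms reaching the four faces of $Q^2$ at four unrelated points, say $(0,0.3),(1,0.7),(0.3,0),(0.7,1)$, all joined at $(0.5,0.5)$: this component crosses $Q^2$ in both coordinate directions, satisfies the uniqueness clause of Proposition~\ref{prop:penrose_sup}, yet is contractible in $\T^2$, so $\im(\io)=0$ and even $E_1$ fails. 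Your intersection-number argument does not rescue this, because the intersection pairing is defined for \emph{closed} cycles, and a crossing path is not one until you have already shown it closes up. The paper deals with exactly this issue by covering the $j$-th circle direction with \emph{three overlapping boxes} $R_0,R_1,R_2$ (each of length $2/3$, pairwise overlapping in length $1/3$), applying Proposition~\ref{prop:penrose_sup} in each box, and then using uniqueness of the large component in each box to argue that the short-side crossing in $R_i$ must meet the short-side crossing in $R_{i+1}$ inside the overlap $R_{i,i+1}$. Chaining the three crossings produces a genuine closed loop around $\T^d$; this gluing step is precisely what your one-box argument is missing.

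\textbf{Subcritical wobble.} You oscillate between $M=O(n)$ and $M=O(1)$ reference points and settle on $O(1)$, but then invoke $\prob{c_j\stackrel{\cO_r}{\longleftrightarrow}\partial B_R(c_j)}$, which requires $c_j\in\cO_r$; with finitely many fixed $c_j$ there is no reason the loop actually passes through any of them. What you really established is an \emph{annulus} crossing near some $c_j$, which is not the event in Proposition~\ref{prop:exponential_decay}. The paper sidesteps this by taking the $\eps r$-grid with $\eps<1/\sqrt d$ (so $M=O(n)$): every ball of radius $r$ then contains a grid point, hence the loop---being a union of such balls---contains some grid point $x_0$, and the one-arm event from $x_0$ genuinely occurs. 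The polynomial prefactor $O(n)$ is harmless against the exponential decay. Alternatively you could keep $M=O(1)$ but upgrade Proposition~\ref{prop:exponential_decay} to an annulus-crossing bound via FKG (as the paper does for the vacancy in Lemma~\ref{lem:box_to_sphere}); either fix is easy, but one of them is required.
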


\begin{proof}
Suppose first that $\lambda < \crito$. Recall that we can consider the torus $\T^d$ as the quotient $Q^d/\set{0\sim 1}$, and take a discretization of $Q^d$ by the grid $\eps r\cdot\mathbb{Z}^d$, where $\eps$ is chosen small enough that any ball of radius
$r$
intersects at least one grid point (i.e. $\eps<1/\sqrt{d}$).

Suppose that $\im{i_1}\ne 0$, i.e.~there exists a non-trivial $1$-cycle in $\Hg_1(\cO_r)$ that is mapped to a non-trivial $1$-cycle in $\Hg_1(\T^d)$. Denote by $\gamma$ one of the (possibly many) combinations of balls in $\cO_r$ that realizes this cycle.   Since $\gamma$ contains at least one ball,  it must intersect with at least one of the grid points, denoted $x_0$. In addition, fixing $R < 1/2$, then the ball $B_R(x_0) \subset\T^d$ is contractible, and therefore its homology is trivial. Thus, any cycle supported on a component that is contained in $B_R(x_0)$ will be mapped to a trivial cycle in $\T^d$ (see Figure~\ref{fig:small_comps}).  We therefore conclude that $\gamma$ must intersect with the boundary $\partial B_R(x_0)$.
In other words, there must be a path in $\cO_r$ connecting $x_0$ to  $\partial B_R(x_0)$.  By Proposition~\ref{prop:exponential_decay}, and the translation invariance of the torus, this occurs with probability at most  $e^{-C_1 R n^{1/d}}$. Since there are $M = (\eps r)^{-d} = O(n)$ many grid points,  taking a union bound we conclude that
\[
	\prob{E_1} \le M e^{-C_1 R n^{1/d}} = O\param{n e^{-C_1 R n^{1/d}}}.
\]
Thus, we conclude that $\lambda \le \lambda_{0,1}$. Since we assumed $\lambda < \lambda_c$, we have $\lambda_{0,1}\ge \lambda_c$.

\begin{figure}
\centering
 \begin{subfigure}{0.45\textwidth}
  \centering
  \includegraphics[width=.6\textwidth]{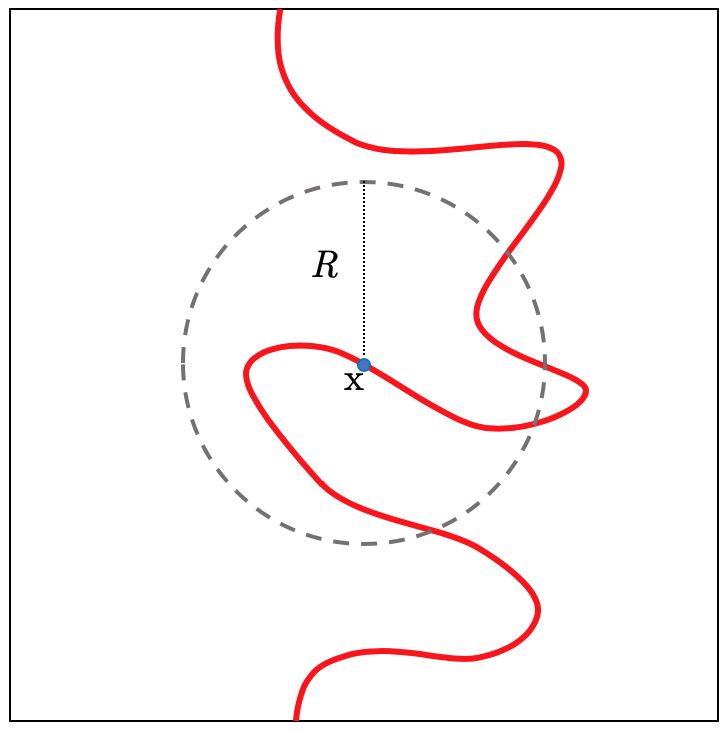}
  \caption{}
  \end{subfigure} \begin{subfigure}{0.45\textwidth}
  \centering
  \includegraphics[width=.6\textwidth]{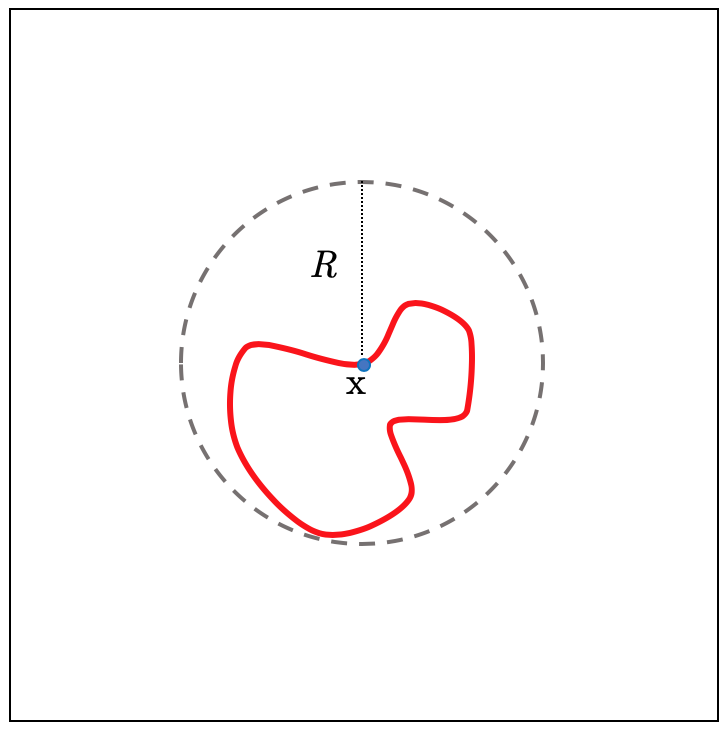}
  \caption{}
  \end{subfigure}\caption{\label{fig:small_comps}
  (a) The path presented here generates a giant $1$-cycle in $\cO_r$. The point $x$ is on this cycle, and we can see that there is a path from $x$ to $\partial B_R(x)$. (b) Here we have a $1$-cycle in $\cO_r$ that is fully contained in $B_R(x)$. Indeed, this not a giant $1$-cycle, since in $\T^d$ this loop does not surround a hole (i.e.~it is a boundary).}
\end{figure}

Next, suppose that $\lambda > \crito$. Note that we can also think of the torus $\T^d$ as $\T^d = ([0,4/3]\times[0,1]^{d-1}) / \Z^d$. With this in mind, we define the boxes
\[
	R_i = \param{\sbrk{i/3, (i+2)/3}\times [0,1]^{d-1}} / \Z^d \subset \T^d,\quad i=0,1,2,
\]
as well as their  intersections $R_{i,j} := R_i\cap R_j$.
For each of the boxes $R_i$, we can define a Poisson process $\cP_n^{(i)} = \cP_n\cap R_i$.
Next, we define the occupancy process $\cO_r^{(i)}$ as the union of $r$-balls around $\cP_n^{(i)}$ in $R_i$ with its Euclidean (rather than toroidal) metric.
Denote by $B_i$ the event that
\begin{enumerate}
\item The process $\cO_r^{(i)}$ contains a path crossing $R_i$ along its short (2/3) side.
\item There is a unique component in $\cO_r^{(i)}$ whose diameter is larger than $1/6$.
\end{enumerate}
According to Theorem \ref{prop:penrose_sup}, we have that $\prob{B_i} \ge 1-e^{-\frac{1}{6}C_3n^{1/d}}$. Using a union bound we then have that
\[
	\prob{B_1\cap B_2 \cap B_3} \ge 1-3e^{-\frac{1}{6}C_3 n^{1/d}}.
\]
Under the event $B = B_1\cap B_2\cap B_3$, we denote by $L_i$ the largest component in $\cO_r^{(i)}$, so that it contains a crossing path on the short side, denoted $\pi_i$. Note that each  $\pi_i$ also contains a path crossing $R_{i,j}$ ($j\ne i$) along the shorter side, denoted $\pi_{i,j}$. While $\pi_{i,j}$ is not necessarily contained in $\cO_r^{(j)}$, it is true that the diameter of $\pi_{i,j}\cap \cO_r^{(j)}$ is at least $1/3-r > 1/6$. Therefore, we conclude that $\pi_{i,j}\cap \cO_r^{(j)}\subset L_j$, implying that there is a path in $\cO_r^{(j)}$ connecting $\pi_{i,j}$ and $\pi_j$. To conclude, under the event $B$ we  have the following sequence of connected paths,
\[
	\pi_1 \stackrel{\cO_r^{(1)}}\longrightarrow \pi_{1,2} \stackrel{\cO_r^{(2)}}\longrightarrow \pi_2 \stackrel{\cO_r^{(2)}}\longrightarrow \pi_{2,3} \stackrel{\cO_r^{(3)}}\longrightarrow \pi_3 \stackrel{\cO_r^{(3)}}\longrightarrow \pi_{3,1}\stackrel{\cO_r^{(1)}}\longrightarrow \pi_1.
\]
In other  words, we showed that under $B$ we can find a path in $\cO_r$ that loops along one of the sides of the torus. Such a loop will generate an element in $\Hg_1(\cO_r)$ that is homologous to the essential $1$-cycle of the torus $\gamma_{1,1}$ (see Section \ref{sec:homology}).  Repeating the same arguments in all $d$-directions, and using a union bound will imply that
\[
	\prob{A_k} \ge 1-3d e^{-\frac{1}{6}C_3 n^{1/d}}.
\]
Thus, we must have $\lambda \ge \lambda_{1,1}$, and since $\lambda >\lambda_c$ we conclude that $\lambda_{1,1}\le \lambda_c$.

Finally, we showed that $\lambda_{1,1}\le \lambda_c \le \lambda_{0,1}$. On the other hand, \eqref{eqn:lambda_k} implies that $\lambda_{0,1}\le \lambda_{1,1}$. Thus, we conclude that $\lambda_{0,1}=\lambda_{1,1} = \lambda_c$, concluding the proof.

\begin{figure}
\centering
\includegraphics[width=0.6\textwidth]{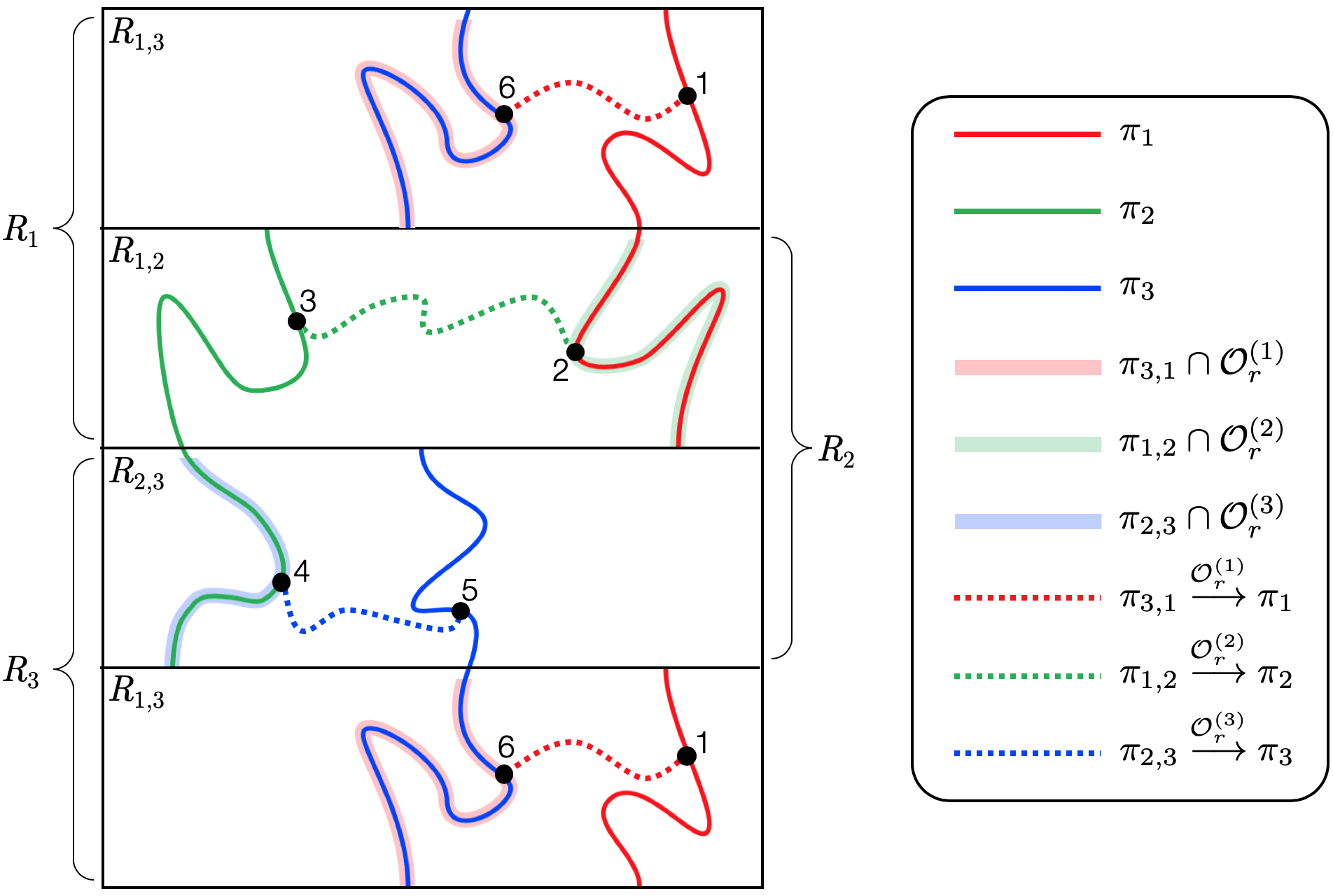}
\caption{\label{fig:h1_perc} Considering the torus $\T^d$ as the quotient $[0,4/3]\times[0,1]^{d-1}/\Z^d$. We then split the torus into the boxes $R_1,R_2,R_3$ and their intersection. Notice that the top and bottom rectangle are identical ($R_{1,3}$). Using the gluing arguments in the proof, and connecting the dots from 1 to 6, we get a loop that generates the top-bottom $1$-cycle in the picture.
Note that as mentioned in the proof, the paths $\pi_{i,j}\cap \cO_r^{(j)}$ are not necessarily crossing for $R_{i,j}$, as can be seen in the figure.}
\end{figure}
\end{proof}
\begin{observation}
The proof that a giant cycle or  equivalently a non-contractible loop exists (in the case of the torus) follows from the uniqueness of the crossing component. This uniqueness also implies that a cycle cannot ``wind around" the torus multiple times before forming a loop, as this would imply multiple crossing components in all of the boxes.
\end{observation}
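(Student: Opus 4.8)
The plan is to read off both assertions of the observation directly from the uniqueness clause of Proposition~\ref{prop:penrose_sup}, re-using the construction from the supercritical half of the proof of Lemma~\ref{lem:proof_1}. Recall the set-up there: fixing a coordinate direction, one covers that direction of $\T^d$ by three cyclically overlapping slabs $R_1,R_2,R_3$, each of width $2/3$, with consecutive overlaps $R_{i,j}$ of width $1/3$; and on the event $B=B_1\cap B_2\cap B_3$ each Euclidean process $\cO_r^{(i)}$ contains a \emph{unique} component $L_i$ of diameter exceeding $1/6$, which moreover carries the short-side crossing path $\pi_i$ (see Figure~\ref{fig:h1_perc}).

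For the first assertion --- existence of a non-contractible loop --- I would argue as follows. The portion of the crossing path $\pi_i$ lying in the overlap $R_{i,j}$ has diameter at least $1/3-r>1/6$ once $n$ is large, so \emph{by the uniqueness clause} it must coincide, inside $\cO_r^{(j)}$, with $L_j$; hence $\pi_i$ and $\pi_j$ lie in the same component of $\cO_r^{(j)}\subseteq\cO_r$. It is exactly here, and only here, that uniqueness is used: absent it, the large sub-arcs coming from the two slabs could belong to distinct components and the concatenation would fail. Chaining this identification cyclically, $\pi_1\leftrightarrow\pi_2\leftrightarrow\pi_3\leftrightarrow\pi_1$, produces a closed loop $\ell\subseteq\cO_r$. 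Lifting $\ell$ to the universal cover $\R^d\to\T^d$ and recording the chosen coordinate, the lift visits the three slabs in cyclic order exactly once, so that coordinate increases by exactly one period; thus $\ell$ has winding number $\pm1$ in that direction, is non-contractible, and is therefore a giant $1$-cycle representing $\pm\gamma_{1,1}$ in $\Hg_1(\T^d)$.

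For the second assertion --- that such a cycle winds around only once before closing --- I would argue by contradiction. Suppose a realization produced a loop whose lift to $\R^d$ changes the chosen coordinate by $w\ge2$ periods. Then that loop meets each slab $R_i$ in at least $w$ pairwise disjoint sub-arcs, each spanning a length-$2/3$ segment of the chosen direction and hence of (toroidal) diameter at least $1/3>1/6$. If any two of these sub-arcs lay in different components of $\cO_r^{(i)}$, this immediately contradicts the uniqueness of the diameter-$>1/6$ component on $B_i$; so on $B$ all of them must merge into the single component $L_i$, for every $i$. But then the giant component is $L_1\cup L_2\cup L_3$ (up to debris of diameter $\le 1/6$, which cannot wind), each $L_i$ confined to a slab that is non-contractible only in the $d-1$ directions transverse to the chosen one, glued in a triangular pattern along the connected overlaps $L_i\cap L_j\subseteq R_{i,j}$. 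A Mayer--Vietoris (or van~Kampen) reading of this triangle then shows that the image of $\pi_1$ of the giant component in the chosen-direction factor of $\pi_1(\T^d)$ is generated by the single once-around loop $\ell$ of the first part --- so the giant cycle is in fact already realised by a winding-$1$ loop, and no multiply-winding realization is forced. Running the same argument in each of the $d$ coordinate directions gives the statement in full.

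The main obstacle is bookkeeping in the second assertion rather than any new idea. One must (i) make precise that a loop whose lift shifts the chosen coordinate by $w$ periods really decomposes into at least $w$ disjoint crossing sub-arcs inside each slab even when it back-tracks, which is handled by passing to the lift and counting net displacement across the slab boundaries; and (ii) verify the Mayer--Vietoris/van~Kampen computation for the triangular gluing, for which one needs the overlaps $L_i\cap L_j$ to be non-empty and connected enough to transmit the gluing --- which is exactly what the path connections $\pi_{i,j}$ on the event $B$ supply. One should also keep track of whether ``diameter'' is measured in the toroidal or the Euclidean metric at each step; these differ by lower-order terms and do not affect the $>1/6$ comparisons. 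Conceptually, both halves amount to the single point that the uniqueness clause of Proposition~\ref{prop:penrose_sup} is what forces the crossing pieces of the different slabs to be the \emph{same} piece.
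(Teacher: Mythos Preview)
Your first assertion is essentially the paper's own reading: the observation is a one-line remark (not given a separate proof) pointing out that the gluing step $\pi_{i,j}\cap\cO_r^{(j)}\subset L_j$ in the proof of Lemma~\ref{lem:proof_1} is precisely where uniqueness of the large component is invoked. So on that point you and the paper coincide.

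For the second assertion you take a different and considerably heavier route than the paper intends. The paper's point is about the \emph{constructed} path: after one circuit $\pi_1\to\pi_2\to\pi_3\to\cdot$, the terminal sub-arc lands back in $R_1$ as a piece of diameter $>1/6$, and uniqueness in $R_1$ forces it to lie in the \emph{same} component $L_1$ we started from, so the path closes after one tour. The contrapositive is exactly the sentence ``as this would imply multiple crossing components in all of the boxes.'' That is the whole argument; no Mayer--Vietoris or van~Kampen is needed.

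Your alternative---start from an arbitrary loop of winding $w\ge 2$ and analyse the triangular cover---is a legitimate question but not what the observation asserts, and your execution has a gap: the van~Kampen/Mayer--Vietoris step requires the overlaps $L_i\cap L_j$ to be \emph{connected}, and the paths $\pi_{i,j}$ only witness non-emptiness. Even granting that step, the conclusion you reach (``the giant cycle is already realised by a winding-$1$ loop'') is immediate from your first part anyway, so the detour buys nothing. If you want to rule out $w\ge 2$ loops for their own sake, note that once the winding-$1$ loop $\ell$ exists, the image of $\pi_1(\cO_r)$ in the chosen $\Z$-factor already contains $\pm1$, hence is all of $\Z$; the Mayer--Vietoris machinery is unnecessary.
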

\subsection{Duality}\label{sec:duality}

The proofs for $k>1$ will require the following duality between the occupancy and vacancy processes.
\begin{lem}\label{lem:duality}
Recall that $\io:\Hg_k(\cO_r) \to \Hg_k(\T^d)$ and $\iv: \Hg_k(\cV_r) \to \Hg_k(\T^d)$ are the maps (group homomorphisms) induced by the inclusion map. Then,
\[
	\beta_k(\T^d) := \rank(\Hg_k(\T^d)) = \rank(\io) + \rank(\bar i_{d-k}).
\]
\end{lem}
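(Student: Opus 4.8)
The plan is to deduce this from Poincar\'e--Alexander--Lefschetz duality on the closed orientable manifold $\T^d$. Morally, over a field the intersection pairing $\Hg_k(\T^d)\times\Hg_{d-k}(\T^d)\to\F$ is perfect, and $\im(\io)$ and $\im(\bar i_{d-k})$ are mutual annihilators under it: a cycle carried by $\cO_r$ and a cycle carried by $\cV_r=\T^d\setminus\cO_r$ have disjoint supports and hence zero intersection number (this gives one inclusion), and the lemma is the assertion that there is no further obstruction (the other inclusion), which is exactly a duality statement. I would, however, package the argument through the long exact sequence of a pair, which is shorter.

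Concretely, I would fix a field $\F$ throughout, so that all (co)homology groups are finite-dimensional, $\Hg^{j}(X)\cong\Hg_j(X)^{*}$ naturally, and restriction in cohomology is the transpose of the induced map in homology. The long exact sequence of the pair $(\T^d,\cO_r)$,
\[
	\cdots\to\Hg_k(\cO_r)\xrightarrow{\io}\Hg_k(\T^d)\xrightarrow{\ j\ }\Hg_k(\T^d,\cO_r)\to\cdots,
\]
is exact, so $\im(\io)=\ker(j)$ and therefore $\rank(\io)=\beta_k(\T^d)-\rank(j)$. Hence it suffices to show
\eqb\label{eqn:plan-key}
	\rank\!\big(j\colon \Hg_k(\T^d)\to\Hg_k(\T^d,\cO_r)\big)=\rank(\bar i_{d-k}).
\eqe

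For \eqref{eqn:plan-key} I would invoke Poincar\'e--Alexander--Lefschetz duality for the closed orientable manifold $\T^d$ and the compact subset $\cV_r$ (regarding the balls as open, so $\cV_r=\T^d\setminus\cO_r$ is closed): capping with the fundamental class gives an isomorphism $\check{\Hg}^{d-k}(\cV_r)\xrightarrow{\ \sim\ }\Hg_k(\T^d,\T^d\setminus\cV_r)=\Hg_k(\T^d,\cO_r)$, natural with respect to inclusions of compact subsets. Applying naturality to the pair of compact sets $\cV_r\subseteq\T^d$ — with ordinary Poincar\'e duality $\Hg^{d-k}(\T^d)\cong\Hg_k(\T^d)$ in the role of the isomorphism for the larger set — identifies $j$, modulo these duality isomorphisms, with the restriction map $\Hg^{d-k}(\T^d)\to\check{\Hg}^{d-k}(\cV_r)$. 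Since $\cV_r$ is nice enough for \v{C}ech and singular cohomology to agree (see below), this restriction is $(\bar i_{d-k})^{*}$, and since a linear map and its transpose have equal rank over a field, \eqref{eqn:plan-key} follows; combining everything yields $\rank(\io)+\rank(\bar i_{d-k})=\beta_k(\T^d)$.

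The step requiring the most care, and where I would spend the effort in a full write-up, is the regularity of $\cV_r$: to run the duality argument with singular (co)homology one needs $\cV_r$ to be a compact ANR (equivalently, to have the homotopy type of a finite CW complex), so that $\check{\Hg}^{*}(\cV_r)=\Hg^{*}(\cV_r)$ and the stated naturality applies. This holds because $\cV_r$, the complement in $\T^d$ of a finite union of metric balls, is a compact semialgebraic set and hence triangulable; if one prefers closed balls, $\cV_r$ is replaced by its closure, to which it is homotopy equivalent via a collar. Orientability of $\T^d$ is immediate. The remaining ingredients — exactness of the long exact sequence of a pair, the Poincar\'e--Alexander--Lefschetz isomorphism and its naturality, and the equality of the ranks of a map and its transpose over a field — are classical, and I would cite a standard reference such as \cite{hatcher_algebraic_2002}.
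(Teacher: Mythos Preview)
Your argument is correct and is essentially the mirror image of the paper's proof: both combine the long exact sequence of a pair, the Alexander--Lefschetz duality isomorphism $\Hg_{*}(\T^d,\T^d\setminus K)\cong\Hg^{d-*}(K)$, Poincar\'e duality on $\T^d$, and the equality of rank for a linear map and its transpose. The only difference is the choice of $K$. The paper takes $K=\cO_r$, so the exact sequence used is that of the pair $(\T^d,\cV_r)$ with $\bar i_{d-k}$ as the incoming map, and the duality square identifies $j$ with the restriction $i^{k}\colon\Hg^{k}(\T^d)\to\Hg^{k}(\cO_r)$; you take $K=\cV_r$, use the pair $(\T^d,\cO_r)$ with $\io$ as the incoming map, and identify $j$ with $(\bar i_{d-k})^{*}$. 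Either way the conclusion is the same rank identity.

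The practical trade-off is in the regularity hypothesis of the duality theorem. With the paper's choice $K=\cO_r$, the compactness and local contractibility are immediate (a finite union of closed balls), so no further argument is needed. With your choice $K=\cV_r$ you had to supply the extra paragraph on $\cV_r$ being a compact ANR via semialgebraicity. That argument is fine, but it is avoidable: had you swapped the roles of $\cO_r$ and $\cV_r$ you would get the same diagram with the easier regularity check, exactly as the paper does.
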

Note that since we are using field coefficients, the homology groups are vector spaces, and we can simply replace $\rank$ with $\dim$.

Recall the definitions of the events $A_k,E_k,\bar A_k,\bar E_k$. The following corollary will be very useful for us.
\begin{cor}\label{cor:duality}
The event $A_k$ occurs if and only if $\bar E_{d-k}$ does not. In other words, $A_k$ and $\bar E_{d-k}$ are complementing events.
\end{cor}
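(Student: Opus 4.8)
The plan is to deduce the rank identity from Alexander duality (or equivalently Poincar\'e--Lefschetz duality) on the torus, combined with the long exact sequence of the pair. First I would recall that $\cO_r$ and $\cV_r$ are, up to homotopy, a ``nice'' pair: $\cO_r$ is an open set which is homotopy equivalent to the \v{C}ech complex on $\cP_n$, and $\cV_r = \T^d \setminus \cO_r$ is its complement. Since $\T^d$ is a closed orientable $d$-manifold, Alexander duality in $\T^d$ gives $\widetilde{\Hg}_k(\cV_r) \cong \widetilde{\Hg}^{d-k-1}(\cO_r)$ type statements, but the cleaner route for the \emph{inclusion-induced} maps is the following: consider the commutative diagram relating the long exact sequence of the pair $(\T^d,\cO_r)$ with the long exact sequence of the pair $(\T^d,\cV_r)$, where Poincar\'e--Lefschetz duality identifies $\Hg_k(\T^d,\cO_r)$ with $\Hg^{d-k}(\cV_r)$ (excision plus duality). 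Under this identification, the connecting map $\Hg_k(\T^d,\cO_r)\to \Hg_{k-1}(\cO_r)$ becomes dual to the inclusion $\iv_{d-k}:\Hg_{d-k}(\cV_r)\to\Hg_{d-k}(\T^d)$ composed with duality on the torus itself.

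Concretely, the key steps in order are: (1) From the long exact sequence of $(\T^d,\cO_r)$, extract $\rank(\io) = \beta_k(\T^d) - \rank\bigl(\Hg_k(\T^d)\to \Hg_k(\T^d,\cO_r)\bigr)$, i.e.\ $\rank(\io)$ is the dimension of the image of $\Hg_k(\cO_r)\to\Hg_k(\T^d)$, which by exactness equals $\beta_k(\T^d)$ minus the rank of the map into the relative group. (2) Apply Poincar\'e--Lefschetz duality: $\Hg_k(\T^d,\cO_r)\cong \Hg^{d-k}(\cV_r)$ (this uses that $\cO_r$ is open so that $\T^d\setminus\cO_r=\cV_r$ is closed, and excision; one may need to thicken $\cV_r$ slightly to an open neighborhood deformation-retracting onto it, which is harmless since $\cV_r$ is an ANR-like set for generic configurations, or simply work with a closed-ball model). (3) Under this duality the natural map $\Hg_k(\T^d)\to\Hg_k(\T^d,\cO_r)$ becomes (up to the self-duality isomorphism $\Hg_k(\T^d)\cong\Hg^{d-k}(\T^d)$) the restriction map $\Hg^{d-k}(\T^d)\to\Hg^{d-k}(\cV_r)$, whose rank equals $\rank(\bar i_{d-k})$ by the universal coefficients theorem (field coefficients make cohomology the dual of homology, and the dual of a linear map has the same rank). (4) Combining, $\rank(\io) = \beta_k(\T^d) - \rank(\bar i_{d-k})$, and since $\beta_k(\T^d)=\beta_{d-k}(\T^d)=\binom{d}{k}$ the statement follows symmetrically.

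For Corollary~\ref{cor:duality}: the event $A_k$ says $\rank(\io)=\beta_k(\T^d)$, which by the lemma is equivalent to $\rank(\bar i_{d-k})=0$, i.e.\ $\im(\bar i_{d-k})=0$, which is exactly the negation of $\bar E_{d-k}$. This is immediate once the lemma is in hand.

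The main obstacle I anticipate is the technical care needed in step (2): Poincar\'e--Lefschetz duality and Alexander duality are cleanest for CW pairs or for compact pairs, whereas $\cO_r$ is an open union of balls and $\cV_r$ its complement. The rigorous fix is to replace $\cO_r$ by the (homotopy equivalent) \v{C}ech complex and $\cV_r$ by a dual cell structure, or to invoke a version of Alexander duality valid for compact ANRs in a manifold; one must check that the inclusion-induced maps correspond correctly under whichever duality is used, i.e.\ that the duality isomorphisms are natural with respect to the relevant inclusions. I would handle this by citing a standard form of Alexander duality in a closed orientable manifold (e.g.\ the version in Hatcher's \emph{Algebraic Topology}, adapted from $S^d$ to $\T^d$ via the fundamental class) and verifying naturality on the level of the long exact sequences of the two pairs, which is where the genuine content of the lemma lives.
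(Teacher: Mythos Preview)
Your proposal is correct and follows essentially the same route as the paper: Alexander/Poincar\'e--Lefschetz duality on $\T^d$ combined with the long exact sequence of a pair and a rank--nullity count, after which the corollary is immediate from the rank identity exactly as you say. The only cosmetic difference is that the paper applies duality with $K=\cO_r$ and works with the pair $(\T^d,\cV_r)$, so that the set required to be compact and locally contractible is the finite union of balls (where this is obvious), whereas you take $K=\cV_r$ and the pair $(\T^d,\cO_r)$; the paper's choice thus sidesteps exactly the regularity concern you raise in your final paragraph.
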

\begin{proof}
The event $A_k$ occurs if and only if $\rank(i_k) = \beta_k(\T^d)$. Using Lemma \ref{lem:duality}, this  holds if and only if $\rank(\bar i_{d-k}) =0$. Finally, by definition $\rank(\bar i_{d-k}) =0$ if and only if $\bar E_{d-k}$ does not hold. This completes the proof.
\end{proof}

The proof of Lemma \ref{lem:duality} requires more familiarity with algebraic topology than the rest of the paper,
 but is not required in order to understand the rest of the paper. We use a form of Alexander duality, which relates the homology of a suitably well-behaved subset of a space with the cohomology of its complement (see \cite{hatcher_algebraic_2002}).

\begin{lem}[\cite{hatcher_algebraic_2002} Thm 3.44]\label{lem:duality_classic}
Let $M$ be a closed orientable $d$-manifold, and let $K\subset M$ be  compact and locally contractible. Then,
\[
 \Hg_{k}(M, M-K) \cong \Hg^{d-k}(K)
\]
\end{lem}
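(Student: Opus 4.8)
Since Lemma~\ref{lem:duality_classic} is Theorem~3.44 of \cite{hatcher_algebraic_2002}, one could simply quote it; the plan I would follow to prove it has two stages. First, establish for an \emph{arbitrary} compact $K\subset M$ a Poincar\'e--Lefschetz type duality isomorphism $\check{\Hg}^{d-k}(K;\F)\cong\Hg_k(M,M-K;\F)$ with \v{C}ech cohomology on the left. Then upgrade \v{C}ech cohomology to singular cohomology, which is where the hypothesis that $K$ is locally contractible enters.

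To set up the duality map I would recall the local-orientation machinery (see \cite{hatcher_algebraic_2002}, Section~3.3): since $M$ is closed and orientable, every compact $K\subset M$ carries a fundamental class $\mu_K\in\Hg_d(M,M-K;\F)$ whose image in $\Hg_d(M,M-x;\F)$ is the chosen local orientation for each $x\in K$, and $\Hg_i(M,M-K;\F)=0$ for $i>d$. Writing $\check{\Hg}^{d-k}(K;\F)=\varinjlim_{U\supseteq K}\Hg^{d-k}(U;\F)$ as a direct limit over open neighborhoods of $K$, one uses excision ($\Hg^{*}(M,M-K;\F)\cong\Hg^{*}(U,U-K;\F)$ for $U\supseteq K$) to push a class $\alpha\in\Hg^{d-k}(U;\F)$ into $\Hg^{d-k}(M,M-K;\F)$, and then the relative cap product with $\mu_K$ sends it to $\mu_K\cap\alpha\in\Hg_k(M,M-K;\F)$; these maps are compatible as $U$ shrinks, defining $D_K\colon\check{\Hg}^{d-k}(K;\F)\to\Hg_k(M,M-K;\F)$. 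One checks $D_K$ is independent of the chosen representative and natural in $K$, intertwining restriction $\check{\Hg}^*(L)\to\check{\Hg}^*(K)$ with $\Hg_*(M,M-L)\to\Hg_*(M,M-K)$ for $K\subseteq L$.

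The heart of the argument is a Mayer--Vietoris bootstrap proving each $D_K$ is an isomorphism. For $K=K_1\cup K_2$ with $K_1,K_2$ compact there are long exact Mayer--Vietoris sequences on both sides --- for \v{C}ech cohomology of the closed cover $\{K_1,K_2\}$, and for $\Hg_*(M,M-\,\cdot\,)$ built from the pair $(M-K_1,M-K_2)$ via excision --- and the maps $D_{(-)}$ fit into a ladder between them. I would then verify $D_K$ is an isomorphism in stages: (i) for $K$ a compact convex subset of a single Euclidean chart, where a direct computation (deformation retracting $M-K$ inside the chart) reduces the claim to Poincar\'e duality for $\R^d$; (ii) for $K$ a finite union of such sets, by induction on the number of pieces using the ladder and the five lemma; (iii) for general compact $K$, by writing $K=\bigcap_j C_j$ for a decreasing sequence of compact neighborhoods $C_j$, each a finite union of closed convex subsets of coordinate charts, and observing that $\check{\Hg}^{d-k}(K;\F)=\varinjlim_j\check{\Hg}^{d-k}(C_j;\F)$ while $\Hg_k(M,M-K;\F)=\varinjlim_j\Hg_k(M,M-C_j;\F)$ (any singular chain in $M-K=\bigcup_j(M-C_j)$ already lies in some $M-C_j$), so that the isomorphisms for the $C_j$ pass to the limit. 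Finally, a compact locally contractible subspace of a $d$-manifold is metrizable, finite-dimensional and locally contractible, hence an ANR, and for ANRs \v{C}ech and singular cohomology agree: $\check{\Hg}^{*}(K;\F)\cong\Hg^{*}(K;\F)$. Substituting this into the duality isomorphism yields $\Hg_k(M,M-K;\F)\cong\Hg^{d-k}(K;\F)$, as claimed.

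I expect the main obstacle to be step (iii) together with the bookkeeping of the Mayer--Vietoris ladder: one must check that the two Mayer--Vietoris sequences genuinely commute with the cap-product maps (care is needed with relative cap products and with passing to the direct limit over neighborhoods), and that the $C_j$ can be chosen simultaneously so that their interiors are cofinal among the open neighborhoods computing $\check{\Hg}^{*}(K)$ and so that $\bigcap_j C_j=K$, making $\varinjlim_j\Hg_k(M,M-C_j;\F)=\Hg_k(M,M-K;\F)$. The base case --- Poincar\'e duality for $\R^d$ --- and the identification $\check{\Hg}^{*}=\Hg^{*}$ for ANRs are then standard inputs.
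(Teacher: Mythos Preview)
The paper does not prove Lemma~\ref{lem:duality_classic}; it simply quotes Theorem~3.44 of \cite{hatcher_algebraic_2002} as a black box and adds a couple of remarks explaining why the hypotheses (compactness, local contractibility) are satisfied in the application to $\cO_r\subset\T^d$. Your sketch is therefore already more than the paper offers, and it is essentially Hatcher's own argument: cap product with the relative fundamental class, a Mayer--Vietoris bootstrap over convex pieces of coordinate charts, passage to the direct limit, and finally the identification of \v{C}ech with singular cohomology for locally contractible compacta. One small slip worth flagging: the relative cap product you want is $\Hg_d(U,U-K)\otimes\Hg^{d-k}(U)\to\Hg_k(U,U-K)\cong\Hg_k(M,M-K)$ directly, with excision used on the \emph{homology} side; there is no natural map sending $\alpha\in\Hg^{d-k}(U)$ into $\Hg^{d-k}(M,M-K)$ as you wrote, since excision identifies $\Hg^{*}(M,M-K)$ with $\Hg^{*}(U,U-K)$ rather than with $\Hg^{*}(U)$. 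This does not affect the overall strategy.
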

Before continuing we make a few remarks. First, the locally contractible condition follows in our case as the number of balls intersecting any point is finite almost surely. We also note that since we are considering (co)homology over a field, homology and cohomology are dual vector spaces, so their ranks/dimensions are the same.  Finally, we note for the reader that although Alexander duality is most commonly stated for the case $M=\mathbb{S}^d$, it remains true for any compact manifold.

\begin{proof}[Proof of Lemma \ref{lem:duality}]
Take $M=\T^d$ and $K=\cO_r$ (so that $M-K = \cV_r$) in Lemma \ref{lem:duality_classic}, and consider the following diagram,
\[\xymatrix@C+1pc{
    \Hg_{d-k}(\cV_r)\ar[r]^{{\bar i_{d-k}}}
        & \Hg_{d-k}(\T^d)\ar[d]^{\cong}\ar[r]^{j}
        & \Hg_{d-k}(\T^d,\cV_r)\ar[d]^{\cong}\\
        & \Hg^{k}(\T^d)\ar[r]^{i^k}
        & \Hg^{k}(\cO_r)}
\]
The first row in this diagram is a part of the long exact sequence for relative homology. The left vertical map is the isomorphism given by Poincar\'{e} duality, and  the second vertical map is the isomorphism provided by Lemma \ref{lem:duality_classic}. The fact that this diagram commutes arises as part of the proof of Lemma \ref{lem:duality_classic} (see \cite{hatcher_algebraic_2002}).

By the rank-nullity theorem, we have
\eqb\label{eq:rank_null}
\rank(\Hg_{d-k}(\T^d)) = \rank(\ker(j)) + \rank(\im(j)).
\eqe
Since the top row is exact we have that $\ker(j) = \im(\bar i_{d-k})$, implying that
 $\rank(\ker(j)) = \rank(\im(\bar i_{d-k}))$. In addition, since we are assuming field coefficients, and using Poincar\'{e} duality, we have that
\[
\rank(\Hg_{d-k}(\T^d)) = \rank(\Hg^k(\T^d)) = \rank(\Hg_k(\T^d)) = \beta_k(\T^d).
\]
Finally, since the square in the diagram commutes, and both vertical maps are isomorphisms, we have that $\rank(\im(j)) = \rank(\im(i^k))$. Since we assume field coefficients, the rank of the vector space and its dual are the same~\cite{de2011dualities}, and therefore  $\rank(\im(i^k)) = \rank(\im(i_k))$.
Putting all these arguments into \eqref{eq:rank_null} completes the proof.

\end{proof}

\subsection{Giant $(d-1)$-cycles}

The duality in Lemma \ref{lem:duality} and Corollary \ref{cor:duality} imply that if we can prove a phase transition for $\Hg_1(\cV_r)$, it will imply a  phase transition  for $\Hg_{d-1}(\cO_r)$ as they are complementing.  We note that dualities of a similar spirit  have been used for bond percolation in $\R^2$~\cite{grimmett_percolation._nodate}, as well as implicitly in ``blocking surface" arguments~\cite{duminil-copin_sharp_2017,ahlberg2018sharpness}.

Our proof of Lemma \ref{lem:proof_1} shows that the transition for $\Hg_1$ is equivalent to the transition for the giant component.
While uniqueness is known for the giant component in $\cO_r$~\cite{penrose_large_1996}, to the best of our knowledge, to date no proof exists for uniqueness of the giant component in $\cV_r$ (i.e. the equivalent of Proposition~\ref{prop:penrose_sup} for the vacancy). While we expect such statement to be true, there are numerous technical obstacles when dealing with the vacancy process, primarily due to its more complicated geometry (see Figure~\ref{fig:vacancy}).
 Thus, for the time being we make the following weaker statement.

\begin{figure}
\centering
    \begin{subfigure}{0.4\textwidth}
    \centering
    \includegraphics[width=0.85\textwidth]{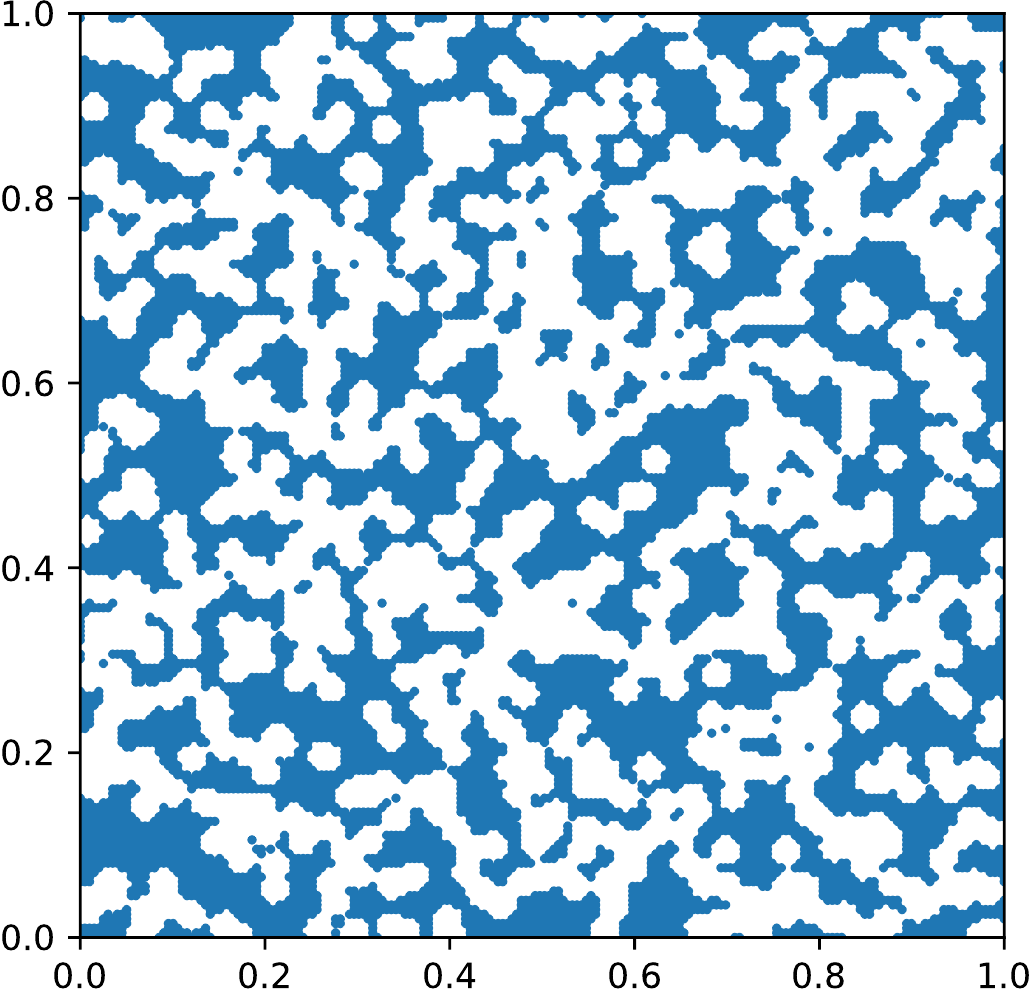}
    \caption{}
    \end{subfigure}
      \hspace{25pt}
      \begin{subfigure}{0.4\textwidth}
      \centering
      \includegraphics[width=\textwidth]{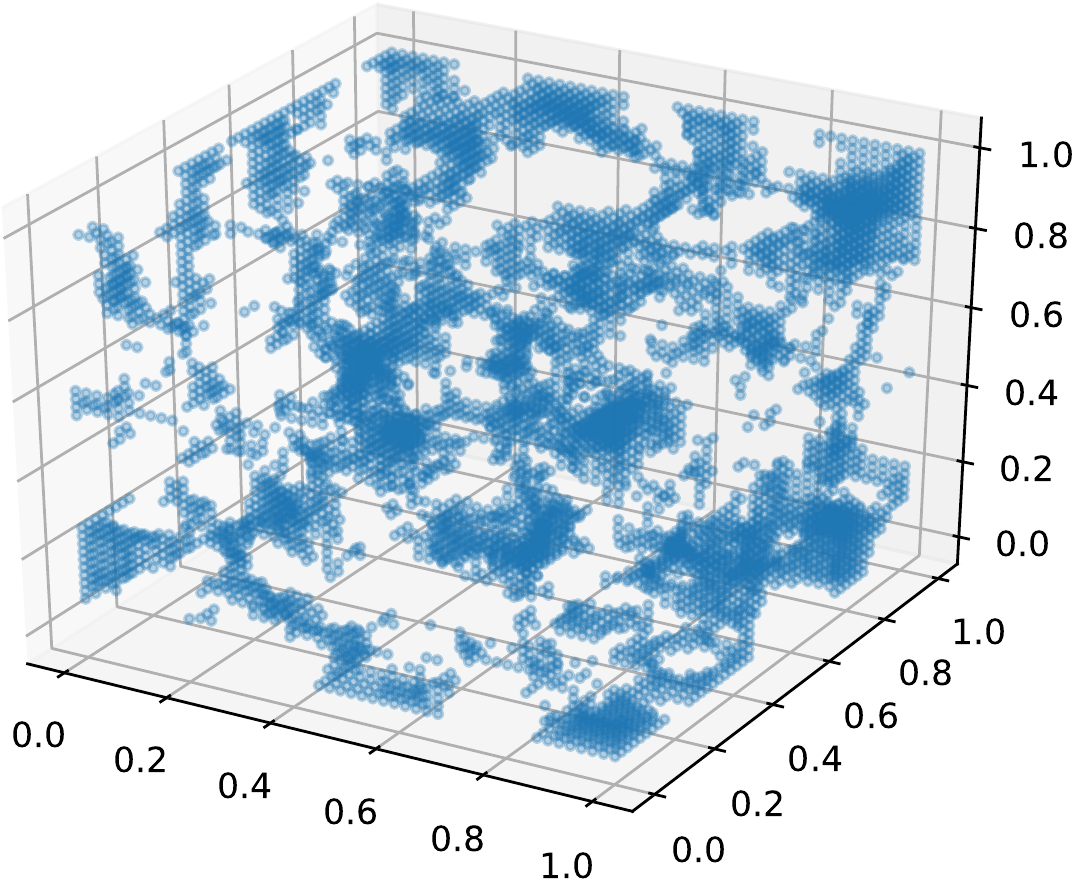}
      \caption{}
      \end{subfigure}
\caption{\label{fig:vacancy} An approximation of the vacancy (shown in blue) at thresholds small enough ($\lambda < \bar \lambda_c$) so that $\bar A_1$ has occurred. (a) In $\T^2$, we have $\bar A_1 = E_1^c$, and therefore we observe no $1$-cycles in the occupancy (white).
 (b) In $\T^3$, we have $\bar A_1 = E_2^c$, and thus the occupancy contains no $2$-cycles. As can be seen, the vacancy has a much more challenging geoemtry as components can be arbitrarily small (whereas in the occupancy, the volume of a component is lower bounded by the volume of a ball, i.e. $\Omega(r^d)$). }
\end{figure}

\begin{lem}\label{lem:proof_3}
The thresholds for the giant ${d-1}$-cycles satisfy
\[
	\lambda_{0,d-1} \le \lambda_{1,d-1} \le \bar \lambda_c < \infty,
\]
where $\bar \lambda_c$ is the percolation threshold for the vacancy process in $\R^d$.
\end{lem}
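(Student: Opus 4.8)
The plan is to reduce the lemma, via Corollary \ref{cor:duality}, to the vacancy analogue of the first half of the proof of Lemma \ref{lem:proof_1}. Two of the three claimed inequalities require no work: $\lambda_{0,d-1}\le\lambda_{1,d-1}$ holds for every index directly from \eqref{eqn:lambda_k} (if $\prob{A_{d-1}}\to 1$ then $\prob{E_{d-1}}\not\to 0$), and $\bar\lambda_c<\infty$ is part of the classical continuum percolation fact recalled in Section \ref{sec:cont_perc}. So the real content is the bound $\lambda_{1,d-1}\le\bar\lambda_c$. By Corollary \ref{cor:duality} the event $A_{d-1}$ is exactly the complement of $\bar E_1$, hence $1-\prob{A_{d-1}}=\prob{\bar E_1}$; in view of the definition of $\lambda_{1,d-1}$ in \eqref{eqn:lambda_k}, it is enough to show that for every fixed $\lambda>\bar\lambda_c$ there is $c=c(\lambda)>0$ with $\prob{\bar E_1}\le e^{-cn^{1/d}}$ for all large $n$. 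This is the vacancy counterpart of the bound $\prob{E_1}=O(ne^{-C_1Rn^{1/d}})$ obtained for $\lambda<\lambda_c$ in Lemma \ref{lem:proof_1}.

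To prove it I would mirror that argument. Suppose $\bar E_1$ occurs, i.e.~$\im(\bar i_1)\ne 0$. Writing $\Hg_1(\cV_r)=\bigoplus_K\Hg_1(K)$ over the (finitely many) connected components $K$ of $\cV_r$, at least one component $K_0$ has $\Hg_1(K_0)\to\Hg_1(\T^d)$ nonzero; in particular $K_0$ is not contained in any contractible subset of $\T^d$, and hence not in any open ball of radius $R<1/2$ (which, being isometric to a Euclidean ball, is contractible). Consequently $\diam(K_0)\ge 1/4$, since otherwise $K_0$ would lie inside $B_{1/4}(p)$ for any $p\in K_0$. Now discretize $\T^d$ by a net $z_1,\dots,z_M$ with $\{B_{\eps r}(z_j)\}$ covering $\T^d$ (so $M=O(n)$), exactly as in the proof of Lemma \ref{lem:proof_1}. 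The nonempty component $K_0$ meets some $B_{\eps r}(z_j)$, and since $\diam(K_0)\ge 1/4$ while $\eps r\to 0$, for large $n$ it must also reach $\partial B_R(z_j)$ for a fixed $R<1/2$; thus the vacancy one-arm event $\{z_j\stackrel{\cV_r}{\longleftrightarrow}\partial B_R(z_j)\}$ is realized. The second estimate of Proposition \ref{prop:exponential_decay} (valid because $\lambda>\bar\lambda_c$), translation invariance of $\T^d$, and a union bound over the $O(n)$ net points then give $\prob{\bar E_1}\le e^{-cn^{1/d}}$ for some $c=c(\lambda)>0$ and all large $n$. Hence the defining condition of $\lambda_{1,d-1}$ in \eqref{eqn:lambda_k} holds for every $\lambda>\bar\lambda_c$, so $\lambda_{1,d-1}\le\bar\lambda_c$, as claimed.

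The step I expect to be most delicate is the discretization, because the vacancy process is geometrically worse behaved than the occupancy (cf.~Section \ref{sec:duality} and Figure \ref{fig:vacancy}): a vacant component of macroscopic diameter can still be arbitrarily thin, so one cannot argue as in the occupancy case that it must contain a fine-grid point. The way around this is the observation used above, namely that a \emph{giant} vacant $1$-cycle forces a vacant component of diameter at least an absolute constant; this is exactly what lets one anchor a one-arm event at a net point no matter how thin the component is. A secondary, routine, point is to make precise the meaning of the one-arm event $\{z_j\stackrel{\cV_r}{\longleftrightarrow}\partial B_R(z_j)\}$ when $z_j$ happens to be occupied rather than vacant; as in Lemma \ref{lem:proof_1} this costs only a shift of the effective radius by $O(r)=o(1)$ and a polynomial-in-$n$ factor in the union bound, so it does not change the exponential rate. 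For $d=2$ this is of course consistent with Lemma \ref{lem:proof_1}, since there $\lambda_{1,d-1}=\lambda_{1,1}=\lambda_c=\bar\lambda_c$.
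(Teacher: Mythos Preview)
Your overall strategy is exactly the paper's: use Corollary \ref{cor:duality} to turn $A_{d-1}$ into $\bar E_1^c$, then bound $\prob{\bar E_1}$ by a vacancy one-arm estimate plus a union bound over a grid of mesh $\eps r$. The diameter argument and the appeal to Proposition \ref{prop:exponential_decay} are also the same.

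The gap is precisely the step you flag as ``secondary, routine.'' It is not. If the large vacant component $K_0$ meets $B_{\eps r}(z_j)$, what you have obtained is the event $\{B_{\eps r}(z_j)\stackrel{\cV_r}{\longleftrightarrow}\partial B_R(z_j)\}$, not $\{z_j\stackrel{\cV_r}{\longleftrightarrow}\partial B_R(z_j)\}$. These are genuinely different: if $z_j$ happens to be covered by a ball of $\cO_r$, the point-to-sphere event simply fails, regardless of how close $K_0$ comes to $z_j$. This is \emph{not} analogous to Lemma \ref{lem:proof_1}: there the grid point $x_0$ was chosen inside a ball of $\cO_r$, so $x_0\in\cO_r$ automatically and the occupancy one-arm from $x_0$ is well-posed. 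For the vacancy there is no guarantee that any net point lies in $K_0$, no matter how fine the net, because vacant components can be arbitrarily thin. A ``shift of the effective radius by $O(r)$'' does not convert a set-to-sphere event into a point-to-sphere event.

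The paper closes this gap with an explicit intermediate lemma (Lemma \ref{lem:box_to_sphere}): since $\{Q_{\eps r}(z_j)\subset\cV_r\}$ has probability bounded below by a constant $C=C(\lambda)>0$ independent of $n$, and since both $\{Q_{\eps r}(z_j)\subset\cV_r\}$ and $\{Q_{\eps r}(z_j)\stackrel{\cV_r}{\longleftrightarrow}\partial B_R(z_j)\}$ are decreasing events, FKG yields
\[
C\cdot\prob{Q_{\eps r}(z_j)\stackrel{\cV_r}{\longleftrightarrow}\partial B_R(z_j)}
\le \prob{z_j\stackrel{\cV_r}{\longleftrightarrow}\partial B_R(z_j)}
\le e^{-C_2 R n^{1/d}}.
\]
This FKG step is the missing idea in your proposal; once you insert it, the rest of your argument goes through verbatim.
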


Before proving the lemma, we require one intermediate technical result.

\begin{lem}\label{lem:box_to_sphere}
Let $c$ be the center point of $Q^d$, and let $B_{R}(c)$ be a ball centered at the origin of radius $R<1/2$, and be $Q_{\eps r}(c)$ be a box of side-length $\eps r$ centered at $c$.

If $\lambda > \bar \lambda_c$ then there exists $C_4>0$ such that
\[
  \prob{Q_{\eps r}(c) \stackrel{\cV_r}{\longleftrightarrow} \partial B_{R}(c) }\le  e^{-C_4 R n^{1/d}}.
\]
\end{lem}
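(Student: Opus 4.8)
The plan is to deduce this box-to-sphere vacancy estimate from the one-point arm bound of Proposition~\ref{prop:exponential_decay}, by conditioning on a small deterministic ball around $c$ being empty of Poisson points and paying for that conditioning with the Harris--FKG inequality. The key observation is that forcing a small neighbourhood of $c$ to be vacant is a cheap event (its probability is a constant) and, once it holds, the box $Q_{\eps r}(c)$ collapses — for connectivity purposes — onto the single point $c$.

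Concretely, set $A := \{Q_{\eps r}(c) \stackrel{\cV_r}{\longleftrightarrow} \partial B_{R}(c)\}$ and $B := \{\cP_n \cap B_{2r}(c) = \emptyset\}$. Both are decreasing events with respect to $\cP_n$: adding a point shrinks $\cV_r$, which can only destroy the connection defining $A$ and can only destroy $B$. Hence the Harris--FKG inequality for Poisson processes gives $\prob{A\cap B}\ge \prob{A}\prob{B}$, i.e.
\[
  \prob{A}\le \frac{\prob{A\cap B}}{\prob{B}},
\]
and $\prob{B}=e^{-n\omega_d(2r)^d}=e^{-2^d\omega_d\lambda}$ is a strictly positive constant independent of $n$ (using $nr^d=\lambda$). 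So it remains to bound $\prob{A\cap B}$.

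On the event $B$ the whole box lies in the vacancy: any $x\in Q_{\eps r}(c)$ satisfies $\|x-c\|\le \tfrac{\sqrt d}{2}\eps r<r$ (since $\eps<1/\sqrt d$), so a Poisson point within distance $r$ of $x$ would lie in $B_{2r}(c)$, contradicting $B$; hence $Q_{\eps r}(c)\subseteq\cV_r$. Being connected and contained in $\cV_r$, the box lies in a single vacancy component, which (as $c\in Q_{\eps r}(c)$) is the component of $c$. Thus on $B$ the event $A$ coincides with $\{c\stackrel{\cV_r}{\longleftrightarrow}\partial B_R(c)\}$, so $A\cap B\subseteq\{c\stackrel{\cV_r}{\longleftrightarrow}\partial B_R(c)\}$, and Proposition~\ref{prop:exponential_decay} (applicable since $\lambda>\bar\lambda_c$) yields
\[
  \prob{A\cap B}\le \prob{c\stackrel{\cV_r}{\longleftrightarrow}\partial B_R(c)}\le e^{-C_2Rn^{1/d}}.
\]
Combining, $\prob{A}\le e^{2^d\omega_d\lambda}\,e^{-C_2Rn^{1/d}}$, and absorbing the constant prefactor into the exponent (valid for $n$ large, as $R$ is bounded below by a fixed constant in every application so that $Rn^{1/d}\to\infty$) gives the claim with any $C_4<C_2$.

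I do not expect a genuine obstacle here; the two points that require care are the direction of monotonicity — both $A$ and $B$ must be \emph{decreasing} for FKG to give the inequality in the useful direction — and the elementary geometric check that $\eps<1/\sqrt d$ forces $Q_{\eps r}(c)\subseteq\cV_r$ once $B_{2r}(c)$ is empty. Everything else is bookkeeping with the scaling $nr^d=\lambda$.
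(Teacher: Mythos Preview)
Your argument is correct and is essentially the same as the paper's: both use FKG with a ``small neighbourhood of $c$ is vacant'' event (you take $\{\cP_n\cap B_{2r}(c)=\emptyset\}$, the paper takes $\{Q_{\eps r}(c)\subset\cV_r\}$, bounding its probability via the slightly smaller ball $B_{r(1+\sqrt d\,\eps/2)}(c)$), then reduce the box-arm event to the one-point arm event and apply Proposition~\ref{prop:exponential_decay}. The only differences are cosmetic choices of the auxiliary event and the radius of the emptied ball.
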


\begin{proof}
From Proposition \ref{prop:exponential_decay} we have when $\lambda > \bar \lambda_c$ we have
\eqb\label{eqn:QtoB}
	\prob{c\stackrel{\cV_r}{\longleftrightarrow} \partial B_{R}(c)} \le e^{-C_2 R n^{1/d}}.
\eqe
Next,
\eqb\label{eqn:ctoQ}
	\prob{Q_{\eps r}(c) \subset \cV_r} \ge \prob{ B_{r(1+\sqrt{d}\eps/2)} \cap \cP_n = \emptyset} = e^{-\lambda\omega_d(1+\sqrt{d}\eps/2)^d} := C
\eqe
Note that if we have that both $Q_{\eps r}(c) \subset \cV_r$ and  $Q_{\eps r}(c) \stackrel{\cV_r}{\longleftrightarrow} \partial B_{R}(c)$, then  necessarily $c\stackrel{\cV_r}{\longleftrightarrow} \partial B_{R}(c)$. Thus,
\[
	\prob{Q_{\eps r}(c) \subset \cV_r \text{ and } Q_{\eps r}(c) \stackrel{\cV_r}{\longleftrightarrow} \partial B_{R}(c)} \le \prob{c\stackrel{\cV_r}{\longleftrightarrow} \partial B_{R}(c)} \le e^{-C_2R n^{1/d}}.
\]
Since both events on the LHS are decreasing, we can use the FKG inequality (see, e.g.~\cite{meester_continuum_1996})  together with \eqref{eqn:ctoQ} and have
\[
	C\cdot \prob{Q_{\eps r}(c) \stackrel{\cV_r}{\longleftrightarrow} \partial B_{R}(c)}\le e^{-C_2 R n^{1/d}}.
\]
Thus, we can find $C_4>0$ such that $\prob{Q_{\eps r}(c) \stackrel{\cV_r}{\longleftrightarrow} \partial B_{R}(c)}\le e^{-C_4 R n^{1/d}}$, completing the proof.

\end{proof}

\begin{proof}[Proof of Lemma \ref{lem:proof_3}]
Suppose that $\lambda > \bar \lambda_c$. The proof is mostly similar to the proof of Lemma \ref{lem:proof_1}.
The main difference here is that there is no discretization $Q^d \cap( \eps r\cdot\Z^d)$ that guarantees that  a component in $\cV_r$ will intersect any of the grid points. Instead, for every $x$ in the grid we take $Q_{\eps  r}(x)$ to be the box of side-length $\eps r$ centered at $x$. Since the union of these boxes covers $Q^d$, we have that every component in $\cV_r$ must intersect at least one of these boxes. As in the proof of Lemma \ref{lem:proof_1} we argue that if $\gamma \subset \cV_r$ is a realization of a non-trivial $1$-cycle in $\Hg_1(\cV_r)$ that is mapped to a non-trivial cycle in $\Hg_1(\T^d)$, then $\gamma$ cannot be contained in a ball of radius $R<1/2$. For any point $x\in \T^d$, using the translation-invariance of the torus, and  Lemma \ref{lem:box_to_sphere}, we have that
\[
\prob{Q_{\eps r}(x) \stackrel{\cV_r}{\longrightarrow} \partial B_R(x)} \le e^{-C_4 R n^{1/d}}.
\]
Since we have $M= (\eps r)^{-d} = O(n)$ boxes, using a union bound, we have
\[
	\prob{\bar E_1} \le Me^{-C_4 R n^{1/d}} = O\param{ne^{-C_4 R n^{1/d}}}.
\]
Using Corollary \ref{cor:duality}, we have that $A_{d-1} = {\bar E_1}^c$. Thus, we have
\[
	\prob{A_{d-1}} \ge 1-Me^{-C_4 R n^{1/d}},
\]
implying that $\lambda > \lambda_{1,d-1}$. Therefore, we conclude that $\lambda_{1,d-1}\le \bar \lambda_c$, completing the proof.
	\end{proof}

\subsection{Giant $k$-cycles, $1<k< d-1$}
In this section we will prove that the appearance of all giant $k$-cycles ($1<k<d-1$) occurs between $\lambda_c$ and $\bar \lambda_c$, and in an increasing order, as staged in Theorem \ref{thm:main}.
The following lemma is the main result of this section.

\begin{lem}\label{lem:proof_2}
For every $1\le k \le d-2$ and $\theta=0,1$,  we have $\lambda_{\theta,k} \le \lambda_{\theta,k+1}$.
\end{lem}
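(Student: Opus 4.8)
The plan is to show that the existence of a giant $k$-cycle forces the existence of a giant $(k+1)$-cycle (for the $\lambda_{0,\cdot}$ bound), and that containing all giant $k$-cycles forces containing all giant $(k+1)$-cycles (for the $\lambda_{1,\cdot}$ bound), by a product/suspension construction within the torus. The key geometric idea is that $\T^d = \T^{d-1} \times \T^1$, and more generally one can split the torus along one coordinate circle. Recall from Section \ref{sec:homology} that a basis for $\Hg_k(\T^d)$ is given by the essential cycles $\gamma_{k,I}$, indexed by $k$-subsets $I \subset \{1,\dots,d\}$, where $\gamma_{k,I}$ is the $k$-face of $Q^d$ spanned by the coordinate directions in $I$ (with the periodic identification). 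The crucial algebraic fact is the K\"unneth-type relation: if we write $\T^d = \T^d_{\hat\jmath} \times \T^1_j$ where $\T^1_j$ is the $j$-th coordinate circle and $\T^d_{\hat\jmath}$ collects the others, then $\gamma_{k+1, I \cup \{j\}}$ (for $j \notin I$) is homologous to the ``product'' of $\gamma_{k,I}$ with the circle $\T^1_j$.

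First I would make the monotonicity of $\prob{E_k}$ precise. Suppose a realization $\omega$ of $\cP_n$ is such that $\im(\io) \ne 0$; say $\gamma_{k,I}$ is in the image, realized by a union of balls $C \subset \cO_r$. I want to ``upgrade'' the configuration so that $\gamma_{k+1, I \cup \{j\}}$ appears. The natural move is to add points: if we additionally place a dense tube of balls along a circle in the $j$-th direction (enough to cover a translate of $\T^1_j$ transverse to $C$), then sweeping $C$ along that circle produces a $(k+1)$-dimensional subset of the enlarged occupancy process realizing $\gamma_{k+1,I\cup\{j\}}$. Quantitatively: let $F$ be the event (for the vacancy/occupancy at level $\lambda$) that a fixed thin slab of width $\sim r$ around a chosen $(d-1)$-plane is fully occupied — this has probability bounded below by a constant $c(\lambda) > 0$ independent of $n$ (it only requires $O(n)$ independent ``local coverage'' events, but one uses instead that we may condition on a single extra batch of Poisson points in the slab, whose contribution has probability bounded below by a fixed power — actually this needs care, see below). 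Combining $E_k$ with $F$ via FKG (both are increasing events for the occupancy) gives $\prob{E_{k+1}} \ge \prob{E_k \cap F} \ge \prob{E_k}\,\prob{F}$, and since $\prob{F}$ does not decay, $\limsup n^{-1/d}\log\prob{E_{k+1}} \ge \limsup n^{-1/d}\log\prob{E_k}$, hence $\lambda_{0,k+1} \ge \lambda_{0,k}$.

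For the $\lambda_{1,\cdot}$ statement, use Corollary \ref{cor:duality}: $A_k = \bar E_{d-k}^c$, so $\lambda_{1,k} \le \lambda_{1,k+1}$ is equivalent to the statement that $\prob{\bar E_{d-k}}$ is (up to exponential-rate monotonicity) non-increasing — i.e.\ $\limsup n^{-1/d}\log \prob{\bar E_{d-k-1}} \le \limsup n^{-1/d}\log\prob{\bar E_{d-k}}$, which is precisely the same monotonicity-in-dimension statement as before but for the \emph{vacancy} process: a giant $\ell$-cycle in $\cV_r$ produces a giant $(\ell+1)$-cycle in $\cV_r$. The same suspension argument applies, now adding a fully \emph{vacant} slab (an increasing event for the vacancy, i.e.\ decreasing in the points), again with probability bounded below by a constant, and FKG for decreasing events closes it. Translating back through Corollary \ref{cor:duality} with $\ell = d-k-1$ yields $\lambda_{1,k} \le \lambda_{1,k+1}$ for $1 \le k \le d-2$.

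The main obstacle I anticipate is making the ``add a slab'' step both topologically and probabilistically airtight. Topologically, one must verify that sweeping the $k$-cycle $C$ along the newly occupied circle genuinely realizes the essential class $\gamma_{k+1,I\cup\{j\}}$ and not something trivial — this is where one invokes the K\"unneth decomposition of $\Hg_{k+1}(\T^d)$ and the compatibility of the inclusion-induced maps $i_k, i_{k+1}$ with the product structure, so that the image gains exactly the extra basis element. One must also check $j \notin I$ can always be arranged (true since $k < d$), and that doing this for all essential $k$-classes simultaneously (needed for the $A_k \Rightarrow A_{k+1}$ direction) only costs a union bound over the $\binom{d}{k}$ directions, not an $n$-dependent factor. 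Probabilistically, the event that a slab of width $\sim r$ is entirely occupied does \emph{not} have probability bounded below by a constant as $n\to\infty$ — covering a $(d-1)$-dimensional slab requires $\Omega(n^{(d-1)/d})$ balls — so the naive FKG bound fails. The fix: one does not need the whole slab occupied, only a single occupied \emph{tube} (a thin neighborhood of \emph{one} circle in direction $j$) that meets $C$; but even a tube around a full circle needs $\Omega(n^{1/d})$ balls. The honest statement is therefore that the swept-out surface lives in the occupancy \emph{restricted to a sub-box}, and one must instead argue: conditionally on $E_k$, with probability $1 - e^{-\Theta(n^{1/d})}$ there is already enough occupancy near $C$ (by Proposition \ref{prop:penrose_sup}, since above $\lambda_c$ the giant component crosses in all directions and is unique, it swallows $C$ and also provides circles in every direction) to realize $\gamma_{k+1,I\cup\{j\}}$. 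So the real argument for $\lambda_{0,k+1}\ge\lambda_{0,k}$ is: if $\lambda > \lambda_{0,k}$ then either $\lambda \le \lambda_c$ (handled by Lemma \ref{lem:proof_1}, where all cycles appear together) or $\lambda > \lambda_c$, in which case uniqueness/crossing of the giant component (Proposition \ref{prop:penrose_sup}) lets us build \emph{every} giant cycle of every dimension $\ge$ the smallest one present, with failure probability $e^{-\Theta(n^{1/d})}$ — and the slab/tube is supplied for free by the crossing giant component rather than by an independent coverage event. The symmetric statement for the vacancy is the genuinely delicate part, since (as the paper notes) uniqueness of the giant vacant component is not available; there one must fall back on the weaker Lemma \ref{lem:box_to_sphere}–style estimates, and I expect the proof to route through the duality in Corollary \ref{cor:duality} precisely to avoid needing vacancy-uniqueness, deducing $\lambda_{1,k}\le\lambda_{1,k+1}$ from the occupancy-side monotonicity applied at complementary dimensions.
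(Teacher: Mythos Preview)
Your argument has the direction of implication reversed, and this is fatal. To obtain $\lambda_{0,k}\le\lambda_{0,k+1}$ you must show that exponential decay of $\prob{E_k}$ forces exponential decay of $\prob{E_{k+1}}$, i.e.\ that $\prob{E_{k+1}}\lesssim\prob{E_k}$; the natural way is $E_{k+1}\subset E_k$. You instead try to establish $\prob{E_{k+1}}\ge c\,\prob{E_k}$ (``a giant $k$-cycle forces a giant $(k+1)$-cycle''), which yields $\limsup n^{-1/d}\log\prob{E_{k+1}}\ge\limsup n^{-1/d}\log\prob{E_k}$ and hence only $\lambda_{0,k+1}\le\lambda_{0,k}$, the opposite of what is claimed. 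The same reversal occurs in your $A_k$ and vacancy arguments: you argue $\bar E_\ell\Rightarrow\bar E_{\ell+1}$, which again gives $\lambda_{1,k}\ge\lambda_{1,k+1}$. The ``fix'' in your last paragraph (if $\lambda>\lambda_{0,k}$ then use the giant component to build $(k+1)$-cycles) still pushes in the wrong direction: it would show $\lambda>\lambda_{0,k}\Rightarrow\lambda\ge\lambda_{0,k+1}$, i.e.\ again $\lambda_{0,k}\ge\lambda_{0,k+1}$.

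The paper's proof is completely different and purely deterministic---no FKG, no slabs, no Proposition~\ref{prop:penrose_sup}. It proves the event inclusions $E_{k+1}\subset E_k$ and $A_{k+1}\subset A_k$ directly (Lemma~\ref{lem:inclusions}), from which the threshold ordering is immediate. The mechanism is: slice $\T^d$ by the $d$ coordinate $(d-1)$-sub-torii $\T^d_i$, with induced processes $\cO_r^{(i)}$ and events $A_k^{(i)},E_k^{(i)}$. Two facts are used: (i) $\bigcap_i A_k^{(i)}\subset A_k$ and $\bigcup_i E_k^{(i)}\subset E_k$ (since the sub-torus classes span $\Hg_k(\T^d)$ for $k<d$), and (ii) duality in the $(d-1)$-torus gives $A_k^{(i)}=(\bar E_{d-1-k}^{(i)})^c$. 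Chaining these with the $d$-dimensional duality $A_{k+1}=(\bar E_{d-1-k})^c$ yields
\[
A_k\ \supset\ \bigcap_i A_k^{(i)}\ =\ \Bigl(\bigcup_i \bar E_{d-1-k}^{(i)}\Bigr)^c\ \supset\ (\bar E_{d-1-k})^c\ =\ A_{k+1},
\]
and similarly $E_k\supset E_{k+1}$. The geometric content is the \emph{reverse} of your sweep: a giant $(k+1)$-cycle in $\cO_r$, restricted to some hyperplane slice, already contains a giant $k$-cycle---not that a $k$-cycle can be thickened to a $(k+1)$-cycle.
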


To prove Lemma \ref{lem:proof_2}, we will use the following statement, which is a consequence of the duality in Lemma \ref{lem:duality}.
\begin{lem}\label{lem:inclusions}
The events $A_k,E_k$ satisfy
\[
A_1 \supset A_2 \supset \cdots\supset A_{d-1}\quad\text{and}\quad E_1 \supset E_2 \supset \cdots\supset E_{d-1},
\]
and the same holds for $\bar A_k,\bar E_k$.
\end{lem}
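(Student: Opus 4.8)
The plan is to prove the two chains of inclusions directly from the cohomology ring of the torus, and then read off the vacancy versions either by the same argument or from the duality of Lemma~\ref{lem:duality}. Recall that with field coefficients $\Hg^\ast(\T^d)\cong\bigwedge[a_1,\dots,a_d]$, the exterior algebra on the degree-one generators $a_1,\dots,a_d$; writing $a_I:=a_{i_1}\smile\cdots\smile a_{i_m}$ for $I=\{i_1<\cdots<i_m\}$, the classes $\{a_I:|I|=m\}$ form a basis of $\Hg^m(\T^d)$, and every class of degree $k+1$ is a sum of products $a_j\smile a_{J\setminus\{j\}}$ with $a_{J\setminus\{j\}}\in\Hg^k(\T^d)$. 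Because we work over a field, for the inclusion $i:\cO_r\hookrightarrow\T^d$ the map $i^\ast$ on $\Hg^m$ is the dual of $i_\ast$ on $\Hg_m$ (and $\Hg_m(\T^d)$ is finite dimensional), so ``$i_\ast$ nonzero'' $\iff$ ``$i^\ast$ nonzero'' and ``$i_\ast$ surjective'' $\iff$ ``$i^\ast$ injective''. Hence $E_m=\{i^\ast\neq 0\text{ on }\Hg^m(\T^d)\}$ and $A_m=\{i^\ast\text{ injective on }\Hg^m(\T^d)\}$, and it suffices to show, for $1\le k\le d-2$, that injectivity (resp.\ non-vanishing) of $i^\ast$ in degree $k+1$ forces the same in degree $k$. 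The only topological input beyond the ring computation is naturality of the cup product, $i^\ast(\alpha\smile\beta)=i^\ast\alpha\smile i^\ast\beta$.

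For the $A$-chain ($A_{k+1}\subset A_k$): assume $i^\ast$ is not injective on $\Hg^k(\T^d)$, so some $0\neq\alpha\in\Hg^k(\T^d)$ has $i^\ast\alpha=0$. Expand $\alpha=\sum_{|I|=k}c_Ia_I$, pick $I_0$ with $c_{I_0}\neq0$ and some $j\notin I_0$ (possible since $k<d$); then in $a_j\smile\alpha$ the only contribution to the $a_{I_0\cup\{j\}}$-coefficient comes from $I=I_0$, so $a_j\smile\alpha\neq0$ in $\Hg^{k+1}(\T^d)$, while $i^\ast(a_j\smile\alpha)=i^\ast(a_j)\smile i^\ast\alpha=0$, contradicting injectivity in degree $k+1$. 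For the $E$-chain ($E_{k+1}\subset E_k$): assume $i^\ast$ vanishes on $\Hg^k(\T^d)$; writing each basis monomial $a_J$, $|J|=k+1$, as $\pm a_j\smile a_{J\setminus\{j\}}$ for some $j\in J$ gives $i^\ast(a_J)=\pm i^\ast(a_j)\smile i^\ast(a_{J\setminus\{j\}})=0$, so $i^\ast$ vanishes on $\Hg^{k+1}(\T^d)$. Taking contrapositives yields $A_{k+1}\subset A_k$ and $E_{k+1}\subset E_k$, hence the two chains for $\cO_r$.

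The identical argument applied to $\bar i:\cV_r\hookrightarrow\T^d$ gives the chains for $\bar A_k,\bar E_k$; alternatively they follow from the occupancy chains through the identities $A_k=(\bar E_{d-k})^c$ (Corollary~\ref{cor:duality}) and $E_k=(\bar A_{d-k})^c$ (a restatement of Lemma~\ref{lem:duality}, using $\beta_k(\T^d)=\beta_{d-k}(\T^d)$), which exchange the occupancy and vacancy chains after re-indexing. I do not expect a real obstacle here: once the cohomology ring $\Hg^\ast(\T^d;\F)\cong\bigwedge[a_1,\dots,a_d]$ and naturality of the cup product are in hand, the whole argument is linear algebra in an exterior algebra. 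The only step needing a little care is the field-coefficient dictionary between $i_\ast$ on homology and $i^\ast$ on cohomology — keeping straight which of ``nonzero / injective / surjective'' corresponds to which, and in which degree — together with the harmless observation that no regularity of $\cO_r$ or $\cV_r$ is required, since universal coefficients over a field and naturality of cup products hold for arbitrary spaces and maps.
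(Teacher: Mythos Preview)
Your argument is correct and complete: the translation between $i_\ast$ and $i^\ast$ over a field is exactly as you describe (since $\Hg_m(\T^d)$ is finite dimensional, $\ker i^\ast=\im(i_\ast)^\perp$, whence $i^\ast$ injective $\iff i_\ast$ surjective and $i^\ast=0\iff i_\ast=0$), and the two cup-product steps in the exterior algebra are clean. The only cosmetic point is that the final paragraph reads as commentary rather than proof, but mathematically nothing is missing.

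Your route is genuinely different from the paper's. The paper does \emph{not} use the ring structure at all: it slices the torus into the codimension-one sub-tori $\T^d_i=(\R^{i-1}\times\{0\}\times\R^{d-i})/\Z^d$, defines restricted processes $\cO_r^{(i)}=\cO_r\cap\T^d_i$ and events $A_k^{(i)},E_k^{(i)}$ there, and proves the geometric inclusions $\bigcap_i A_k^{(i)}\subset A_k$ and $\bigcup_i E_k^{(i)}\subset E_k$ from injectivity of $\Hg_k(\T^d_i)\to\Hg_k(\T^d)$ together with $\sum_i\Hg_k(\T^d_i)=\Hg_k(\T^d)$. It then applies the Alexander-duality identity $A_k^{(i)}=(\bar E^{(i)}_{d-1-k})^c$ on each $(d-1)$-torus and $A_{k+1}=(\bar E_{d-1-k})^c$ on $\T^d$ to turn the sub-torus inclusions into $A_k\supset A_{k+1}$ (and similarly for $E$). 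In effect, the paper climbs one homological degree by dropping one ambient dimension and invoking duality twice; you climb one degree by cupping with a degree-one class. Your argument is shorter, avoids the sub-torus machinery and the second application of duality, and visibly generalizes to any compact manifold whose rational cohomology is generated in degree~1 (so in particular any torus, but also products with tori, nilmanifolds with this property, etc.). The paper's approach, on the other hand, is closer in spirit to the geometric slicing used elsewhere in percolation and may be easier to adapt if one later wants quantitative control via the restricted processes $\cO_r^{(i)}$.
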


\begin{proof}
For $i=1,\ldots,d$ define $\T^d_i:= (\R^{i-1}\times\set{0}\times \R^{d-i})/\Z^d$. In other words, $\T^d_i$ are $(d-1)$-dimensional flat tori embedded in $\T^d$. Let $\cO_r^{(i)} := \cO_r\cap \T_i^d$, and $\cV_r^{(i)} := \cV_r\cap\T_i^d$ be the induced (or projected) processes. Similarly to the events $E_k,A_k$ we can define $E^{(i)}_k,A_k^{(i)},\bar E^{(i)}_k,\bar A_k^{(i)}$ ($1\le k \le d-2$) with respect to the processes $\cO_r^{(i)},\cV_r^{(i)}$, and the $(d-1)$-torus $\T_i^d$.

Fix $1\le k\le d-2$, and suppose that $A_k^{(i)}$ occurs, then
\eqb\label{eqn:A_k_i}
	\im(\Hg_k(\cO_r^{(i)})\to \Hg_k(\T^d_i)) = \Hg_k(\T^d_i).
\eqe
We now require two topological facts:
\begin{enumerate}
  \item The inclusion  $\T_i^d\hookrightarrow \T^d$ induces an injective map in homology, i.e. the map
  $\Hg_k(\T_i^d)\to \Hg_k(\T^d)$ is injective;
  \item For $k<d$, the $k$-dimensional classes in $\T^d$ are spanned by the $k$-dimensional classes in the $d$ subtorii, $\T^d_i$, i.e. $\sum_{i=1}^d \Hg_k(\T^d_i) = \Hg_k(\T^d)$, where the summation represents the sum of the vector spaces as subspaces of  $\Hg_k(\T^d)$.
\end{enumerate}
These two results are well-known. However, for completeness we include proofs in Appendix~\ref{sec:appendix1}.
The fact that $\Hg_k(\T_i^d)\to \Hg_k(\T^d)$ is injective, implies that
\[
	\im(\Hg_k(\cO_r^{(i)})\to \Hg_k(\T^d_i)) \cong \im(\Hg_k(\cO_r^{(i)})\to \Hg_k(\T^d)).
\]
 If $A_k^{(i)}$ occur for all $i=1,\ldots,d$ we have
\[
	\Hg_k(\T^d) = \sum_{i=1}^d \Hg_k(\T^d_i) \cong \sum_{i=1}^d \im(\Hg_k(\cO_r^{(i)})\to \Hg_k(\T^d))\subset \im(\Hg_k(\cO_r)\to \Hg_k(\T^d)) \subset \Hg_k(\T^d),
\]
implying that the last relation is an equality,
so that $A_k$ holds as well.
In other words, we showed that
\eqb \label{eqn:inc_A}
	A_{k}^{(1)} \cap\cdots\cap A_k^{(d)}\subset A_k,
\eqe
and similarly we can show that
\eqb\label{eqn:inc_E}
	E_{k}^{(1)} \cup\cdots\cup E_k^{(d)}\subset E_k.
\eqe
The same inclusions will apply for $\bar A_k, \bar E_k$.

Next, from Corollary \ref{cor:duality} we have that $A_k = (\bar E_{d-k})^c$, and since $\T^d_i$ is a $(d-1)$-torus, we also have that $ A_k^{(i)} = (\bar E_{d-1-k}^{(i)})^c$. Putting all these connections together we have that
\[
\splitb
	A_k &\supset  \param{A_k^{(1)}\cap\cdots\cap A_k^{(d)}}= \param{(\bar E_{d-1-k}^{(1)})^c \cap\cdots\cap (\bar E_{d-1-k}^{(d)})^c }
	\supset (\bar E_{d-1-k})^c = A_{k+1},
\splite
\]
and
\[
\splitb
	E_k &\supset  \param{E_k^{(1)}\cup\cdots\cup E_k^{(d)}}= \param{(\bar A_{d-1-k}^{(1)})^c \cup\cdots\cup (\bar A_{d-1-k}^{(d)})^c }
	\supset (\bar A_{d-1-k})^c = E_{k+1}.
\splite
\]
Similarly, we can prove that $\bar A_k \supset \bar A_{k+1}$ and $\bar E_k \supset \bar E_{k+1}$, concluding the proof.
\end{proof}

\begin{proof}[Proof of Lemma \ref{lem:proof_2}]
For any $\lambda < \lambda_{0,k}$ we have $\limsup_{\ninf} n^{-1/d}\log \prob{E_k} < 0$. From Lemma \ref{lem:inclusions} we have that $E_{k+1}\subset E_k$, and therefore we also have $\limsup_{\ninf} n^{-1/d}\log \prob{E_{k+1}} < 0$, implying that $\lambda < \lambda_{0,k+1}$. Thefeore, we conclude that $\lambda_{0,k}\le \lambda_{0,k+1}$.

Similarly, for all $\lambda > \lambda_{1,k+1}$ we have $\limsup_{\ninf} n^{-1/d}\log (1- \prob{A_{k+1}})< 0$, and from Lemma \ref{lem:inclusions} we have $\limsup_{\ninf} n^{-1/d}\log (1- \prob{A_{k}})< 0$, implying that $\lambda > \lambda_{1,k}$. Therefore, $\lambda_{1,k} \le \lambda_{1,k+1}$. This completes the proof.
\end{proof}
\section{Discussion}\label{sec:discuss}

In this paper we have defined a notion of giant $k$-dimensional cycles that emerge in a continuum percolation model on the torus.  We have shown the existence of thresholds for the appearance of these cycles and that these thresholds are in the thermodynamic limit. In this section we provide some insights and directions for future work.

\begin{itemize}
\item The main open problem  remains proving Conjecture \ref{conj}, i.e.~that all transitions are sharp, and that the ordering between the thresholds is strict. As we stated earlier, to prove sharpness for $k=d-1$ all that is required is a uniqueness statement for the giant vacancy component. For the intermediate dimensions, it is less clear how to prove sharpness.

\item A desirable extension would be to state and prove analogous results for general manifolds as well as the appearance of the fundamental group. In the case of the torus, the sharp threshold for the fundamental group follows directly from the homological statements in this paper. The main challenges in manifolds will be that we must deal with (a) curvature, and (b) the existence and representability of giant cycles. In principle, we do not expect curvature to change these statements, however it adds significant technical complications (see \cite{bobrowski_random_2019}). The lack of a product structure in general manifolds makes relating giant cycles of different dimensions more difficult as well. However, we note that these do not apply to the fundamental group.

\item In this paper we used balls with a fixed radius $r$. The most common model studied in continuum percolation is where the grains are balls with \emph{random} radii. Most of the statements in this paper can be translated to the random-radii case (assuming bounded moments). However, the equality $\lambda_{0,1} = \lambda_{1,1} = \lambda_c$ requires a quantitative uniqueness of the crossing component, i.e. bounds on the second largest component, which to the best of our knowledge, has not been proved for the general case.

\item In a recent paper \cite{bobrowski2020homological}, we experimentally studied the homological percolation thresholds in various models including continuum percolation, site percolation, and Gaussian random fields. We compared these thresholds to the zeros of the expected Euler characteristic curve (as a function of $\lambda$), which has an explicit expression. The simulation results in \cite{bobrowski2020homological} show that the percolation thresholds always appear very near the zeros of the expected EC. This is a somewhat surprising result, as the EC is a quantitive descriptor (counting cycles), while the percolation thresholds describe a qualitative phenomenon (the emergence of giant cycles). It remains an open question as to the nature of this observed correlation, and whether  the zeros of the EC curve (which can be evaluated analytically) can potentially be used to approximate or at least bound the percolation thresholds.

\item The definitions we used here for giant cycles, can be applied in the context on various other percolation models such as bond and site percolation. In principle, the general behavior should follow similarly to the one we observed here, while the proof might require a slightly different approach.  In particular, the duality statement we have here, does not apply directly to other models.

\item In the applied topology aspect of this work, recall that we wish to distinguish between the signal and noise cycles in persistent homology. For every signal (giant) cycle $\gamma_{\text{signal}}$, we have $\death(\gamma_{\text{signal}}) = \text{const}$, while the results in this paper imply that $\birth(\gamma) = \text{const}\cdot n^{-1/d}$ (since the giant cycles are formed when $nr^d = \lambda$). Therefore, the persistence value for the giant cycles, satisfies
\[
\pi(\gamma_{\text{signal}}) := \frac{\death(\gamma_{\text{signal}})}{\birth(\gamma_{\text{signal}})} =  \Theta(n^{1/d}).
\]
In \cite{bobrowski_maximally_2017} it was shown that the persistence of all the noise cycles satisfies
\[
\pi(\gamma_{\text{noise}})  = O\param{\param{\frac{\log n}{\logg n}}^{1/k}}.
\]
In other words, the results in this paper indicate that asymptotically the persistence of the signal and the noise cycles differ by orders of magnitudes. From the applied topology perspective, this is an optimistic statement, since it means that given a large sample, we could use persistence to distinguish between signal and noise.
\end{itemize}

\bibliographystyle{plain}
\bibliography{zotero}
\appendix
\section{A topological supplement for the proof of Lemma~\ref{lem:inclusions}}\label{sec:appendix1}
Recall that $\T^d$ is the flat $d$-torus and $\T_i^d$ is the $(d-1)$-torus defined by the $i$-th flat. First,  we show that the inclusion map	$\T_i^d\hookrightarrow \T^d$, induces an injective map on homology
$$\Hg_k(\T_i^d)\hookrightarrow \Hg_k(\T^d),$$
Express the $d$-torus as the $d$-fold cartesian product of circles
	$\T^d = \mathbb{S}^1\times \ldots\times \mathbb{S}^1$.
Hence, we can rewrite $\T^d$ as the cartesian product
$\T^d = \T_i^{d}\times \mathbb{S}^1$.
Taking homology, we have the following isomorphism via the K\"{u}nneth formula,
$$ \bigoplus\limits_{k+\ell=m} \Hg_k(\T_i^{d})\otimes \Hg_\ell(\mathbb{S}^1) \cong \Hg_k(\T^{d})$$
Restricting to $\ell=0$ yields the required injective map. Note that this follows from the fact that there is no torsion since we are working over field coefficients.

Next we show that  for $k<d$
$$ \sum\limits_{i=1}^d \Hg_k(\T_i^d) = \Hg_k(\T^d).$$
where $\Hg_k(\T_i^d)$ are taken as vector subspaces of the vector space $\Hg_k(\T^d)$. This is well defined since the maps are injective by the argument above.

Hence, we can take the sum of the individual vector spaces as subspaces, denoted by
$ \sum\limits_{i=1}^d \Hg_k(\T_i^d)$. For each $i$, $\Hg_k(\T_i^d)\subseteq \Hg_k(\T^d)$, so it follows that
$$\sum\limits_{i=1}^d \Hg_k(\T_i^d)\subseteq \Hg_k(\T^d).$$

In the other direction, on can again use the representation of $\T^d$ as the Cartesian product of circles. Applying the K\"{u}nneth formula $d$ times, we obtain
 $$\Hg_k(\T^d) = \sum\limits_{\sum_{i} k(j) = k } \Hg_{k(1)}(\mathbb{S}^1) \otimes  \cdots \otimes \Hg_{k(d)}(\mathbb{S}^1) $$
As the homology of $\mathbb{S}^1$ is only non-zero for $k=0,1$, a $k$-cycle in $\T^d$ can be represented by taking the $\Hg_1(\mathbb{S}^1)$ in some $k$ coordinates and $\Hg_0(\mathbb{S}^1)$
in the others. This is an element of any $\Hg_k(\T^d_i)$ where $k(i)=0$ and since $k<d$, there must be at least one. The result follows.
\end{document}